\theoremstyle{plain}
\newtheorem{theorem}{Theorem}[section]
\newtheorem{corollary}[theorem]{Corollary}
\newtheorem{lemma}[theorem]{Lemma}
\newtheorem{proposition}[theorem]{Proposition}
\theoremstyle{definition}
\newtheorem{example}[theorem]{Example}
\newcommand{\bk}[2]{B_{[#1,#2]}}
\newcommand{\precdot}{\smash{%
\raisebox{0ex}{%
\setlength{\tabcolsep}{-1.1pt}%
\begin{tabular}{@{}cc@{}}%
\:$\prec$&$\cdot$\:%
\end{tabular}}}}
\begin{document}
\title{The Shard Intersection Order on Permutations}
\author{Erin Bancroft}
\address{Department of Mathematics, North Carolina State University, Raleigh, NC 27695}
\email{erin\_bancroft@ncsu.edu}
\subjclass[2010]{Primary: 05A05; Secondary: 20F55, 52C35, 06B10}
\thanks{This is part of my dissertation at North Carolina State University supervised by Dr. Nathan Reading.}
\begin{abstract}
The shard intersection order is a new lattice structure on a finite Coxeter group $W$ which encodes the geometry of the reflection arrangement and the lattice theory of the weak order. In the case where $W$ is the symmetric group, we characterize shard intersections as certain pre-orders which we call permutation pre-orders. We use this combinatorial characterization to determine properties of the shard intersection order. In particular, we give an EL-labeling.
\end{abstract}
\maketitle
\section{Introduction}
\emph{Shards} were introduced in~\cite{nr03} as a way to understand lattice congruences of the weak order on a finite Coxeter group. They are defined in terms of the geometry of the associated simplicial hyperplane arrangement. The collection $\Psi$ of arbitrary intersections of shards studied in~\cite{nR09}, forms a lattice under reverse containment. This lattice is called the \emph{shard intersection order}. Surprisingly, $\Psi$ was found to be in bijection with the elements of the finite Coxeter group $W$, and thus the shard intersection order defines a new lattice structure on $W$. This lattice is graded and contains the $W$-noncrossing partition lattice NC($W$) as a sublattice. Indeed, for any Coxeter element, the subposet induced by $c$-sortable elements~\cite{nR07} is a sublattice isomorphic to NC($W$). A formula for calculating the M\"{o}bius numbers of lower intervals was given in~\cite{nR09}, but overall the structure of the shard intersection order is not yet well-understood.\\
\indent In this paper we consider the most classical Coxeter group, the symmetric group, whose associated hyperplane arrangement is the braid arrangement. In Section~\ref{sectshard}, we give necessary background information on hyperplane arrangements and the general construction of shards. We then specifically describe shards and shard intersections in the symmetric group. Throughout the paper, no prior knowledge of Coxeter groups will be assumed. Following a suggestion from Aguiar~\cite{mA}, in Section~\ref{sectpermpre} we characterize shard intersections of type $A$ by realizing them combinatorially as certain pre-orders, which we call \emph{permutation pre-orders}. In Section~\ref{sectintord}, we realize the shard intersection order as an order on the permutation pre-orders and use this realization to determine properties of the order, including an EL-labeling. Finally, in Section~\ref{sectnon} we characterize \emph{noncrossing pre-orders}, which correspond to $c$-sortable permutations.

\section{Shards and Shard Intersections}\label{sectshard}
In this section we begin by defining a central hyperplane arrangement and the construction of shards within it. We then focus on the symmetric group, starting with necessary background and concluding with an explicit description of the correspondence between shard intersections and permutations. A \emph{linear hyperplane} in a real vector space $V$ is a codimension-1 linear subspace of $V$. A \emph{central hyperplane arrangement} $\mathcal{A}$ in $V$ is a finite collection of linear hyperplanes. The \emph{regions} of $\mathcal{A}$ are the closures of the connected components of $V\setminus(\bigcup\mathcal{A})$. Each region is a closed convex polyhedral cone whose dimension equals dim($V$). \\
\indent Fix a \emph{base region} $B$ in the set of regions. We define a partial order on the set of regions called the \emph{poset of regions}. In the poset of regions, $Q$ is below $R$ if and only if the set of hyperplanes separating $Q$ from $B$ is contained in the set of hyperplanes separating $R$ from $B$. The unique minimal element in the poset is $B$ and the unique maximal element is $-B$, the region antipodal to $B$. A region $R$ covers $Q$ if and only if $R$ and $Q$ share a facet-defining hyperplane which separates $R$ from $B$ but does not separate $Q$ from $B$. (More information on the poset of regions can be found in \cite{aB90, pE84}.)\\
\indent A region is \emph{simplicial} if the normal vectors to its facet-defining hyperplanes form a linearly independent set. A central hyperplane arrangement is \emph{simplicial} if each of its regions is simplicial. A \emph{rank-two subarrangement} $\mathcal{A'}$ of $\mathcal{A}$ is a hyperplane arrangement consisting of all of the hyperplanes of $\mathcal{A}$ which contain some subspace of codimension-$2$, provided $|\mathcal{A'}|\geq 2$. In this subarrangement there exists a unique region $B'$ containing $B$, and the two facet-defining hyperplanes of $B'$ are called the \emph{basic hyperplanes} of $A'$.\\
\indent \emph{Shards} are defined by imposing a cutting relation on the hyperplanes in $\mathcal{A}$. For each nonbasic hyperplane $H$ in a rank-two subarrangement $\mathcal{A'}$, cut $H$ into connected components by removing the subspace $\cap\mathcal{A'}$ from $H$. A hyperplane $H$ is cut in every rank-two subarrangement in which it is non-basic, so it may be cut many times. Cutting every hyperplane $H$ in this way, we obtain a set of connected components, the closures of which are the \emph{shards} of $\mathcal{A}$. The set of \emph{intersections of shards} of an arrangement $\mathcal{A}$ is denoted $\Psi(\mathcal{A},B)$ or simply $\Psi$ when $\mathcal{A}$ and the choice of $B$ are clear. The empty intersection of shards is the entire space $V$.\\ 
\indent A shard $\Sigma$ is a \emph{lower shard} of a region $R$ if it is contained in a facet-defining hyperplane of $R$ which separates $R$ from a region covered by $R$. One of the primary results regarding shard intersections is a bijection~\cite[Proposition 4.7]{nR09} between regions of a simplicial hyperplane arrangement and intersections of shards. The bijection sends a region $R$ to the intersection of the lower shards of $R$. The shard intersections form a lattice under reverse containment, which induces, via the bijection, a partial order on the regions. Called the \emph{shard intersection order}, this partial order is different from the poset of regions and will be discussed further in Section~\ref{sectintord}. \\ 
\indent Now that we have considered the construction of shards in a general setting, let us turn to the symmetric group $S_n$. We begin with some background on $S_n$ and its Coxeter arrangement. Throughout this paper, permutations $\pi\in S_n$ will be written in one-line notation as $\pi=\pi_1\pi_2\cdots\pi_n$ with $\pi_i=\pi(i)$. An \emph{inversion} of $\pi$ is a pair $(\pi_i,\pi_j)$ such that $i<j$ and $\pi_i>\pi_j$. A \emph{descent} of $\pi$ is a pair $\pi_i\pi_{i+1}$ such that $\pi_i>\pi_{i+1}$. A \emph{descending run} of $\pi$ is a maximal descending sequence $\pi_i\pi_{i+1}\cdots\pi_{s}$. In this context, maximal implies that either $i=1$ or $\pi_i>\pi_{i-1}$ and either $s=n$ or $\pi_s< \pi_{s+1}$. For example, the permutation $\pi=1642735$ has descending runs $1, 642, 73$ and $5$.\\
\indent The Coxeter arrangement of the symmetric group consists of the hyperplanes $H_{ij}=\{\vec{x}\in\mathbb{R}^n:x_i=x_j\}$ for $1\leq i<j\leq n$. The permutations in $S_n$ are in bijection with the regions of $\mathcal{A}(S_n)$ as follows: the permutation $\pi\in S_n$ corresponds to the region $R_{\pi}=\{\vec{x}\in\mathbb{R}^n: x_{\pi_{1}} \leq x_{\pi_2} \leq \cdots \leq x_{\pi_n}\}$. We will choose the base region to be the one corresponding to $\pi=123\cdots n$. \\
\indent There are $2^{j-i-1}$ shards in each hyperplane $H_{ij}$. Each shard is obtained by choosing $\epsilon_k\in\{\pm 1\}$ for each $k$ with $i<k<j$, and defining $\Sigma$ to be the cone $\{\vec{x}: x_i=x_j\ \textrm{and}\ \epsilon_k x_i \leq \epsilon_kx_k\ \textrm{for}\ i<k<j\}$. An intersection of shards can be represented similarly as a set of vector equalities and inequalities, by taking the union of the equalities and inequalities defining the shards being intersected.
 
\begin{figure}[ht]
\[
\raisebox{-27 pt}{\includegraphics{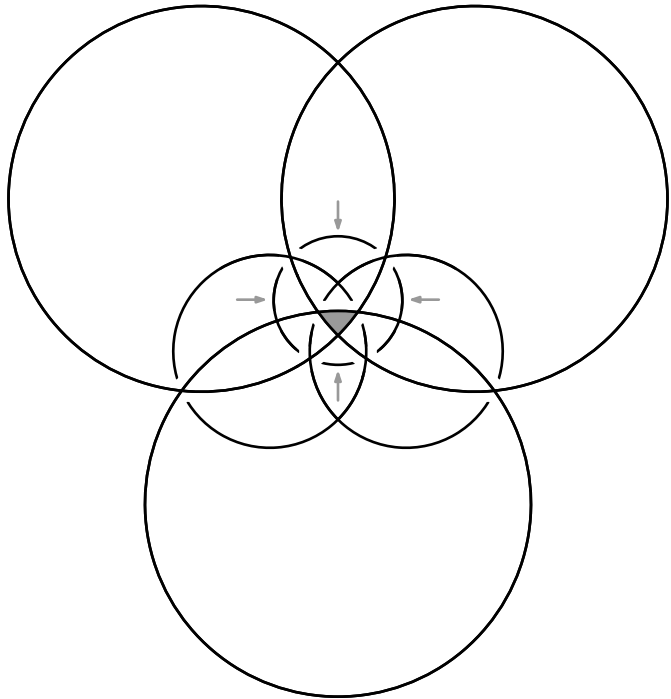}}
\]
\caption{Shards in the Coxeter arrangement $\mathcal{A}(S_4)$}\label{fig:s4shards}
\end{figure}

\begin{example}
The Coxeter arrangement $\mathcal{A}(S_4)$ consists of six hyperplanes through the origin in $\mathbb{R}^3$. These planes, intersected with the unit sphere in $\mathbb{R}^3$, define an arrangement of six great circles on the sphere. A stereographic projection yields an arrangement of six circles in the plane. This arrangement of circles is shown in Figure~\ref{fig:s4shards}. The three largest circles are the hyperplanes $H_{12}$ on the top left, $H_{23}$ on the bottom, and $H_{34}$ on the top right. The two medium circles are the hyperplanes $H_{13}$ on the left and $H_{24}$ on the right. The smallest circle is the hyperplane $H_{14}$. Regions of $\mathcal{A}(S_4)$ appear as curve-sided triangles. The base region, corresponding to $1234$, is shaded in gray. The shards are closed 2-dimensional cones, thus they appear as full circles or as circular arcs in the figure. To clarify the picture, where shards intersect, certain shards are offset slightly from the intersection to indicate that they do not continue through the intersection. The four shards contained in the hyperplane $H_{14}$ are marked by arrows in the figure. The top shard in $H_{14}$ is defined by $\{\vec{x}: x_1=x_4,\ x_1\leq x_2,\ -x_1\leq -x_3\}$. The left shard in $H_{14}$ is defined by $\{\vec{x}: x_1=x_4,\ x_1\leq x_2,\ x_1\leq x_3\}$. The bottom shard in $H_{14}$ is defined by $\{\vec{x}: x_1=x_4,\ -x_1\leq -x_2,\ x_1\leq x_3\}$. The right shard in $H_{14}$ is defined by $\{\vec{x}: x_1=x_4,\ -x_1\leq -x_2,\ -x_1\leq -x_3\}$.
\end{example}

Given a permutation $\pi$ and the corresponding region $R$, the descents of $\pi$ correspond to the hyperplanes containing the lower shards of $R$. For example, $43$ and $31$ are descents in $\pi=4312$ and the hyperplanes containing the shards below $R$ are $H_{13}$ and $H_{34}$. The cone formed by the intersection of the lower shards of $R$ satisfies $x_i=x_j$ for each descent $ji$ of $\pi$. Now we need to determine which shards from these hyperplanes are the lower shards of $R$. The shard below $R$ contained in $H_{ij}$ is the shard on the same side as $R$ of each hyperplane cutting $H_{ij}$. Thus, for each $k$ with $i<k<j$, the cone satisfies $x_i\geq x_k$ if and only if $(k,i)$ is an inversion of $\pi$, and the cone satisfies $x_i\leq x_k$ if and only if $(k,i)$ is not an inversion of $\pi$. Continuing the example, the shard below $R$ contained in $H_{13}$ will satisfy $x_1\leq x_2$, since $(2,1)$ is not an inversion in $\pi=4312$. The following proposition summarizes this explicit description for the shard intersection associated to a given permutation. 

\begin{proposition}\label{permtoshardint}
For a permutation $\pi$, the corresponding shard intersection is the cone consisting of points $(x_1,\ldots, x_n)$ satisfying the following conditions for each descent $ji$ of $\pi$.
\begin{enumerate}
\item $x_i\equiv x_j$ 
\item $x_i\geq x_k$ if and only if $i<k<j$ and $(k,i)$ is an inversion of $\pi$
\item $x_i\leq x_k$ if and only if $i<k<j$ and $(k,i)$ is not an inversion of $\pi$.
\end{enumerate}
\end{proposition}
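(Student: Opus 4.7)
The plan is to formalize the discussion immediately preceding the proposition by combining three ingredients: (a) by the bijection of \cite[Proposition~4.7]{nR09}, the shard intersection associated to $\pi$ equals the intersection of the lower shards of $R_\pi$; (b) the facet-defining hyperplanes of $R_\pi$ that separate it from a region it covers are indexed by the descents of $\pi$; and (c) each shard in a hyperplane $H_{ij}$ is cut out by a choice of signs $\epsilon_k\in\{\pm1\}$ for $i<k<j$, so selecting the lower shard amounts to choosing the signs that are consistent with $R_\pi$.

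First I would verify condition (1). Every facet-defining hyperplane of $R_\pi=\{x_{\pi_1}\leq\cdots\leq x_{\pi_n}\}$ has the form $H_{\pi_s\pi_{s+1}}$ for some $s$. The region obtained by swapping positions $s$ and $s+1$ in $\pi$ is covered by $R_\pi$ in the poset of regions exactly when this swap decreases the inversion count, i.e. when $\pi_s>\pi_{s+1}$. Writing $j=\pi_s$ and $i=\pi_{s+1}$, each descent $ji$ contributes the hyperplane $H_{ij}$ and hence the equality $x_i=x_j$; intersecting over all descents of $\pi$ gives (1).

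Next I would verify (2) and (3). Fix a descent $ji$ at positions $(s,s+1)$ and consider the unique lower shard of $R_\pi$ in $H_{ij}$. By the cutting construction, this shard is the one whose signs $\epsilon_k$ agree with $R_\pi$ on each hyperplane that cuts $H_{ij}$, namely $H_{ik}$ and $H_{jk}$ for $i<k<j$. For such a $k$, the position of $k$ in the one-line notation of $\pi$ is neither $s$ nor $s+1$, so it is either strictly less than $s$ or strictly greater than $s+1$; the former is precisely the condition that $(k,i)$ is an inversion of $\pi$. In the former case $x_k\leq x_{\pi_s}=x_j=x_i$ on $R_\pi$, yielding the inequality $x_i\geq x_k$; in the latter case we obtain $x_i\leq x_k$. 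Intersecting the resulting inequalities over all descents and all admissible $k$ gives (2) and (3). The only real obstacle here is careful bookkeeping between positions, values, and inversions; once those translations are pinned down the proposition is a direct transcription of the bijection of \cite{nR09} into the explicit language of $S_n$.
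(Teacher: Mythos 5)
Your proposal is correct and follows the same route the paper takes: the paper's ``proof'' is exactly the informal discussion preceding the proposition (descents of $\pi$ index the lower facet hyperplanes of $R_\pi$, and the lower shard in $H_{ij}$ is selected by choosing, for each $i<k<j$, the sign consistent with $R_\pi$, which translates into whether $(k,i)$ is an inversion). Your write-up just makes the position-versus-value bookkeeping explicit; the one cosmetic slip is writing $x_{\pi_s}=x_j=x_i$ ``on $R_\pi$'' where only $x_j\leq x_i$ holds on $R_\pi$ and the equality holds on the shard, but this does not affect the argument.
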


We conclude this section with a lemma relating inversions of a permutation to intersections between descending runs. The notation $[a,b]$ stands for the set of integers $C$ with $a\leq c\leq b$ for $c\in C$. We will say that two descending runs $D=d_1d_2\cdots d_m$ and $E=e_1e_2\cdots e_n$ \emph{overlap} if $[d_m,d_1]\cap[e_n,e_1]\neq\emptyset$. This lemma will be used in Section~\ref{sectpermpre}. 

\begin{lemma}\label{inversionint}
Suppose $(i,j)$ is an inversion in $\pi$ such that $i$ and $j$ are in distinct descending runs $i_1i_2\cdots i_s$ and $j_1j_2\cdots j_t$ respectively. If $[i_s,i_1]\cap[j_t,j_1]=\emptyset$, then there exists a chain of descending runs $D_1, D_2,\ldots, D_n$ between $i_s$ and $j_1$ such that $[i_s,i_1]\cap D_1\neq\emptyset, D_n\cap[j_t,j_1]\neq\emptyset$, and $D_k$ and  $D_{k+1}$ overlap for $1\leq k \leq n-1$. 
\end{lemma}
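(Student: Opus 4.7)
The plan is to encode the descending runs as closed intervals on the real line and deduce the existence of the chain from a covering-and-connectivity argument. Let $R_1,\ldots,R_m$ be the descending runs of $\pi$ that occur strictly between $I=i_1\cdots i_s$ and $J=j_1\cdots j_t$, and write $[l_k,f_k]$ for the value-interval of $R_k$, so that $f_k$ is its first element and $l_k$ its last. From $i\in[i_s,i_1]$, $j\in[j_t,j_1]$, $i>j$, and $[i_s,i_1]\cap[j_t,j_1]=\emptyset$ we conclude $i_s>j_1$, so $[i_s,i_1]$ sits strictly above $[j_t,j_1]$ on the real line.

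The heart of the argument is a \emph{no-skipping} claim: for every real $v$ with $j_1<v<i_s$, some $R_k$ satisfies $l_k\le v\le f_k$. Classify each run as \emph{above}, \emph{below}, or \emph{crossing} $v$ according as all its elements exceed $v$, all are less than $v$, or its value-interval contains $v$. Then $I$ is above $v$ and $J$ is below $v$. A direct transition from an above-run to a below-run between two consecutive runs is impossible: the ascent that separates consecutive descending runs would force the last element of the above-run (which exceeds $v$) to be less than the first element of the next run (which is less than $v$). Hence some run in the sequence $I,R_1,\ldots,R_m,J$ must cross $v$, and since neither $I$ nor $J$ does, that run is one of the $R_k$.

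Applying this claim with $v=i_s-\tfrac12$ and $v=j_1+\tfrac12$ produces an $R_a$ with $i_s\in[l_a,f_a]$ (so $R_a$ overlaps $[i_s,i_1]$) and an $R_b$ with $j_1\in[l_b,f_b]$ (so $R_b$ overlaps $[j_t,j_1]$). Let $\mathcal{R}$ be the set of intermediate runs whose value-intervals meet $[j_1,i_s]$. By no-skipping, the intervals of $\mathcal{R}$ cover the connected real interval $[j_1,i_s]$, and therefore the graph on $\mathcal{R}$ with edges recording overlap of intervals is connected: a splitting $\mathcal{R}=A\sqcup B$ into classes with no cross-overlap would produce disjoint closed subsets $\bigcup A$ and $\bigcup B$ of $\mathbb{R}$ each meeting $[j_1,i_s]$, contradicting its connectedness. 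Any path from $R_a$ to $R_b$ in this graph yields the required chain $D_1,\ldots,D_n$.

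The only delicate point I anticipate is the boundary case $i_s=j_1+1$, where no integer lies strictly between $j_1$ and $i_s$; the half-integer choice of $v$ handles this uniformly, and in fact forces a single $R_k$ whose interval contains both $i_s$ and $j_1$, so the chain collapses to one run. Everything else is a routine but careful bookkeeping of the ascent/descent constraints between consecutive runs.
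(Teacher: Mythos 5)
Your proof is correct. The engine in both arguments is the same elementary observation---consecutive descending runs are separated by an ascent, so the sequence of runs between the two given runs cannot pass directly from a run lying entirely above a level $v$ to one lying entirely below it---but you package it quite differently. The paper applies this observation only at the single level $v=i_s$, producing one run $D_1$ whose value-interval straddles $i_s$, and then assembles the rest of the chain by induction on the number of runs between $i_s$ and $j_1$ (an inductive step it leaves largely implicit). You instead apply the observation at every level $v\in(j_1,i_s)$, conclude that the value-intervals of the intermediate runs cover all of $[j_1,i_s]$, and extract the chain in one stroke from the connectedness of that interval: the overlap graph on the covering intervals is connected, so a path from the run whose interval contains $i_s$ to the run whose interval contains $j_1$ exists. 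This avoids the induction entirely, makes every link of the chain visibly meet $[j_1,i_s]$, and your half-integer levels $v=i_s-\tfrac12$ and $v=j_1+\tfrac12$ cleanly handle the endpoints (including the degenerate case $i_s=j_1+1$). One cosmetic remark: the lemma's conclusion is written as $[i_s,i_1]\cap D_1\neq\emptyset$, but both your argument and the paper's actually establish intersection with the \emph{value-interval} of $D_1$, which is what the paper's definition of overlap calls for---so you are proving the intended statement.
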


\begin{proof}
Let $\pi=\cdots i_1i_2\cdots i_s\cdots j_1j_2\cdots j_t\cdots$ as in the statement of the lemma. Since $[i_s,i_1] \cap [j_t,j_1] = \emptyset$ and $i>j$, we know that $i_x>j_y$ for all $x\in [s]$ and $y\in [t]$. We claim there must be a descending run $d_1d_2\cdots d_p$ between $i_s$ and $j_1$ such that $d_1>i_s$ and $d_p<i_s$. We know $i_s>j_1$ and $i_s$ and $j_1$ are both part of different descending runs. Directly to the right of $i_s$ is the descending run $f_1\cdots f_r$ with $f_1>i_s$ since it is a descending run distinct from $i_1\cdots i_s$. Suppose $f_1\cdots f_r$ does not satisfy the claim. Then $f_z>i_s$ for all $z\in[r]$. Directly to the right of $f_r$ is the descending run $g_1\cdots g_p$ with $g_1>f_r$ since it is a distinct descending run. This implies $g_1>i_s$. Suppose $g_1\cdots g_l$ does not satisfy the claim. Then $g_z>f_r>i_s$ for all $z\in [l]$. We can continue considering adjacent descending runs until we reach the descending run $h_1\cdots h_w$ which is directly to the left of $j_1$. Again $h_1>i_s$ and if $h_1\cdots h_w$ does not satisfy the claim, then $h_z>i_s$ for all $z\in[w]$. From this we obtain $h_z>i_s>j_1$ for all $z\in[w]$ and in particular $h_w>j_1$. This implies that $h_1\cdots h_w$ and $j_1\cdots j_t$ are not separate descending runs, which is a contradiction. Therefore, at least one descending run $d_1\ldots d_p$ between $i_s$ and $j_1$ must satisfy the claim. Thus $[i_s,i_1]\cap[d_p,d_1]\neq\emptyset$. Let $d_1\cdots d_p$ be $D_1$. Also, since $d_1>i_s$, we know that $d_1>j_1$. Then, by induction on the number of descending runs between $i_s$ and $j_1$, there exists a chain of descending runs $D_1, D_2,\ldots, D_n$ between $i_s$ and $j_1$ such that $[i_s,i_1]\cap D_1\neq\emptyset, D_n\cap[j_t,j_1]\neq\emptyset$, and $D_k$ and $D_{k+1}$ overlap for $1\leq k \leq n-1$.
\end{proof}

\section{Permutation Pre-orders}\label{sectpermpre}

This section begins with background and notation concerning pre-orders, leading to an injection from shard intersections to pre-orders. Next, the bijection $\mu$ from permutations to pre-orders is defined and its image is characterized. We finish the section with the description of the inverse of $\mu$.\\
\indent A pre-order is a reflexive, transitive, binary relation. A pre-order $P$ on $[n]$ defines an equivalence relation on $[n]$ by setting $i\equiv j$ if and only if $i\preceq j \preceq i$. We will call the classes of this equivalence relation \emph{blocks}. We simultaneously think of $P$ as a pre-order on $[n]$ and as a partial order on its blocks. Notationally, when considering a specific pre-order $P$, a block will be labeled as $B_{[i,j]}$ where $i$ is the minimal element in the block and $j$ is the maximal element in the block. For example, a singleton block containing the element $l\in [n]$ will be labeled as $B_{[l,l]}$. $B(v)$ is defined to be the block in $P$ containing the element $v\in [n]$. We will say blocks $B_{[i,j]}$ and $B_{[k,l]}$ \emph{overlap} if the intervals $[i,j]$ and $[k,l]$ have a non-empty intersection. A block $B_{[i,j]}$ is covered by a block $B_{[k,l]}$ if $\bk{i}{j}\prec\bk{k}{l}$ and there does not exist a block $B_{[s,t]}$ such that $B_{[i,j]}\prec B_{[s,t]} \prec B_{[k,l]}$. To denote that block $\bk{k}{l}$ covers block $\bk{i}{j}$ we will write $\bk{i}{j}\precdot\bk{k}{l}$. An element $i$ is covered by an element $j$ in $P$ if $i\prec j$ and for any $s$ such that $i\preceq s \preceq j$, either $s\equiv i$ or $s \equiv j$.\\
\indent We now describe an injection from the set $\Psi$ of shard intersections in $\mathcal{A}(S_n)$ to the set $\mathcal{P}$ of pre-orders on $[n]$. The pre-order $P\in\mathcal{P}$ corresponding to a shard intersection $\Gamma\in\Psi$ is found by only considering the indices in the equalities and inequalities defining $\Gamma$. Thus for $i,j\in [n]$, $i\preceq j$ in $P$ if and only if the inequality $x_i\leq x_j$ holds in $\Gamma$. In particular, if $x_i=x_j$ in $\Gamma$ then $i\preceq j\preceq i$ in $P$.

\begin{figure}[ht]
\[
\raisebox{-27 pt}{\includegraphics{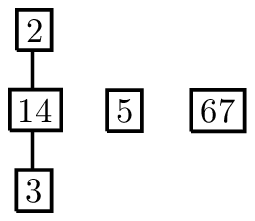}}
\]
\caption{$P$ for $\Gamma=\{\vec{x}: x_1=x_4,\ x_6=x_7,\ x_1\leq x_2,\ -x_1\leq -x_3\}\in S_7$}\label{shardint}
\end{figure}

\begin{example}
The shard intersection $\Gamma=\{\vec{x}: x_1=x_4,\ x_6=x_7,\ x_1\leq x_2,\linebreak -x_1\leq -x_3\}$ in $\mathbb{R}^7$ corresponds to the pre-order in Figure~\ref{shardint} which has blocks $B_{[3,3]}=\{3\},\ B_{[1,4]}=\{1,4\},\ B_{[2,2]}=\{2\},\ B_{[5,5]}=\{5\}$ and $B_{[6,7]}=\{6,7\}$. $\Gamma$ is the intersection of the shards $\{\vec{x}: x_1=x_4,\ x_1\leq x_2,\ -x_1\leq -x_3\}$ and $\{\vec{x}: x_6=x_7\}$ in $S_7$.
\end{example}

Let $\mu:S_n\to \mathcal{P}$ be the map that takes a permutation $\pi\in S_n$ to a pre-order $P \in \mathcal{P}$ as follows: each descending run $\pi_m\pi_{m+1}\cdots\pi_{m+t}$ in $\pi$ is a block $B_{[\pi_{m+t},\pi_m]}=\{\pi_m,\pi_{m+1},\ldots,\pi_{m+t}\}$ in $P$. For distinct blocks $B_{[\pi_{i+j},\pi_i]}$ and $B_{[\pi_{k+l},\pi_k]}$ which overlap, $B_{[\pi_{k+l},\pi_k]} \succeq B_{[\pi_{i+j},\pi_i]}$ in $P$ if and only if $\pi_k\cdots\pi_{k+l}$ occurs to the right of $\pi_i\cdots\pi_{i+j}$ in $\pi$, i.e.\ $(i+j)<k$. The transitive closure of these relations defines the pre-order $P$.

\begin{figure}[ht]
\[
\raisebox{-27 pt}{\includegraphics{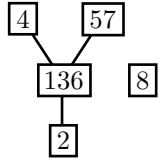}}
\]
\caption{$\mu(26314758)$}\label{exofmu}
\end{figure}

\begin{example}
For $\pi=26314758$ the descending runs form the following blocks: $B_{[2,2]}=\{2\}, B_{[1,6]}=\{1,3,6\}, B_{[4,4]}=\{4\}, B_{[5,7]}=\{5,7\}, B_{[8,8]}=\{8\}$. The pre-order $\mu(\pi)$ is shown in Figure \ref{exofmu}.
\end{example} 

\begin{proposition}\label{muiscomp}
$\mu$ is the composition of the bijection between permutations and shard intersections with the bijection between shard intersections and pre-orders.
\end{proposition}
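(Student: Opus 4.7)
The plan is to fix $\pi \in S_n$, take the shard intersection $\Gamma$ described by Proposition~\ref{permtoshardint}, and verify that the associated pre-order $P$ (obtained from $\Gamma$ by the injection $\Psi\to\mathcal{P}$) agrees with $\mu(\pi)$. This amounts to checking two things: that the equivalence classes (blocks) of $P$ are exactly the descending runs of $\pi$, and that the induced order on these blocks matches $\mu(\pi)$.

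For the block statement, condition~(1) of Proposition~\ref{permtoshardint} gives an equality $x_i = x_j$ for every descent $(j,i)$, so chaining these equalities shows that each descending run of $\pi$ is contained in a single block of $P$. To rule out any merging of distinct runs, I would model the defining relations of $\Gamma$ as a directed graph $G$ on $[n]$ in which condition~(1) supplies bidirectional edges and conditions~(2) and~(3) supply directed edges $k \to i$ or $i \to k$ according to whether $k$ precedes or follows $i$ in $\pi$. A key observation is that if $i < k < j$ for a descent $(j,i)$, then $k$ cannot lie in the descending run of $i$ (since that run is strictly decreasing and $i,j$ occupy adjacent positions), so every non-equality edge is a cross-run edge. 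A direct sign check against the definition of an inversion shows every such cross edge points from a descending run appearing earlier in $\pi$ to one appearing later; hence any directed chain in $G$ visits distinct descending runs in strictly left-to-right order, no cycle can span two distinct runs, and the equivalence classes of $P$ coincide with the descending runs of $\pi$, which are the blocks of $\mu(\pi)$.

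For the order on blocks, let $D = d_1 \cdots d_m$ and $E = e_1 \cdots e_s$ be distinct descending runs with $E$ to the left of $D$ in $\pi$ and $[e_s,e_1] \cap [d_m,d_1] \neq \emptyset$. Disjointness of $E$ and $D$ forces either $e_s < d_m < e_1$ or $d_m < e_s < d_1$: in the first case $d_m$ lies in some gap $(e_r, e_{r-1})$ of $E$ and condition~(3) applied to the descent $(e_{r-1}, e_r)$ gives $x_{e_r} \leq x_{d_m}$; in the second case, $e_s$ lies in some gap $(d_l, d_{l-1})$ of $D$ and condition~(2) applied to $(d_{l-1}, d_l)$ gives $x_{e_s} \leq x_{d_l}$. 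Either way $E \preceq D$ in $P$. Conversely, every direct cross-run inequality from conditions~(2), (3) witnesses an element $k$ lying in both $[e_s,e_1]$ and $[d_m,d_1]$, so the intervals overlap and the sign analysis above determines the direction. Thus the generating relations of $\mu(\pi)$ coincide with the direct cross-run relations of $P$, and the transitive closures on the common set of blocks agree, giving $P = \mu(\pi)$. The main obstacle is the block step, where one must carefully verify that every cross-run edge in $G$ points from the earlier descending run to the later one so that no spurious equivalences arise; once that directional property is in hand, the order equality follows by the case analysis above.
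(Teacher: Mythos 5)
Your argument is correct and takes essentially the same route as the paper: both proofs identify the pre-order obtained from the shard intersection of Proposition~\ref{permtoshardint} with $\mu(\pi)$ by matching their generating relations (equalities along descents, inequalities governed by inversions). Your version is somewhat more thorough than the paper's, which only verifies that the relations of Proposition~\ref{permtoshardint} are reproduced by $\mu$ --- your acyclicity argument ruling out merged blocks and your two-way check on the cross-run relations make explicit the converse containment that the paper leaves implicit.
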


\begin{proof}
All that needs to be shown is that the relations between elements established by $\mu$ correspond to the relations given in Proposition \ref{permtoshardint}. Suppose $ji$ is a descent in $\pi$. Then $i$ and $j$ are in the same descending run in $\pi$ and in $\mu(\pi)$ they will be in the same block, implying that $i\equiv j$. Suppose $k$ is an element of $\pi$ with $i<k<j$. In $\mu(\pi)$ the block $B(k)$ must be related to the block $B(i)$ since they overlap. If $(k,i)$ is an inversion in $\pi$, then we have $B(i)\succeq B(k)$ implying $i\succeq k$. If $(k,i)$ is not an inversion, then we have $B(i)\preceq B(k)$ implying $i\preceq k$. This corresponds to the relations given in Proposition~\ref{permtoshardint}.
\end{proof}

\indent Define a \emph{permutation pre-order} as a pre-order $P$ on $[n]$ with the following two conditions:
\begin{enumerate} 
\item[(P1)] if any two blocks in $P$ overlap, they must be comparable in $P$
\item[(P2)] all covering relationships in $P$ must be between blocks that overlap.
\end{enumerate} 
Let $\Omega=\{P: P\ \textrm{is a permutation pre-order}\}$.\\  
\indent As noted previously, a single shard $\Sigma\in\Psi$ can be represented as $\Sigma=\{\vec{x}: x_i=x_j\ \textrm{and}\ \epsilon_k x_i \leq \epsilon_kx_k\ \textrm{for}\ i<k<j\}$. Thus $\Sigma$ corresponds to a pre-order $P$ with one block $B_{[i,j]}$ of size two and $n-2$ singleton blocks: $B_{[v,v]}$ for $v\in[n]\setminus\{i,j\}$. Each block $B_{[k,k]}$ for $i<k<j$ will be comparable to $B_{[i,j]}$ in $P$ and all other blocks will be incomparable to it. All covering relationships in $P$ will involve overlapping blocks.  Therefore $P\in\Omega$. Define $\Omega^{\Sigma}\subset\Omega$ to be the set of permutation pre-orders corresponding to single shards.

\begin{proposition}\label{permtopreorder}
$\mu$ is a bijection from permutations in $S_n$ to pre-orders in $\Omega$. 
\end{proposition}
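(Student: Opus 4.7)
The plan is to establish three facts: $\mu(S_n)\subseteq\Omega$, $\mu$ is injective, and $\mu$ is surjective onto $\Omega$. Injectivity is immediate from Proposition~\ref{muiscomp}: $\mu$ factors as the bijection between $S_n$ and $\Psi$ followed by the injection from $\Psi$ into $\mathcal{P}$.

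For the containment $\mu(S_n)\subseteq\Omega$, I verify (P1) and (P2). Condition (P1) holds by construction, since for two descending runs of $\pi$ whose integer intervals overlap, $\mu$ directly assigns a comparison (the later-positioned run yields the larger block). For (P2), suppose $B\precdot B'$ in $\mu(\pi)$ with $B$ and $B'$ non-overlapping. Since direct relations connect only overlapping blocks, the relation $B\prec B'$ must arise via the transitive closure of a chain $B=C_0\prec C_1\prec\cdots\prec C_m=B'$ of direct relations with $m\geq 2$ and consecutive pairs overlapping. Then $C_1$ is strictly between $B$ and $B'$ in $\mu(\pi)$, contradicting the covering assumption.

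For surjectivity, given $P\in\Omega$ I construct a permutation $\pi\in S_n$ with $\mu(\pi)=P$. The idea is to take a linear extension of the partial order on blocks of $P$ augmented by the following rule: among two blocks that are incomparable in $P$ (and hence, by (P1), non-overlapping as integer intervals), the one with the smaller interval comes first. Concatenating the blocks in this total order, each block written in descending order, produces a candidate sequence $\pi$.

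The main obstacle is verifying that this procedure is well-defined and correct. First, the augmented relation must be shown acyclic so that a total extension exists. Second, one must check that consecutive blocks $B,B'$ in the chosen extension never merge into a single descending run of $\pi$, i.e.\ $\max B'>\min B$. This splits into cases: if $B$ and $B'$ overlap as intervals, distinctness of the blocks forces the strict inequality; if they are incomparable in $P$, the ``lower first'' rule guarantees it; and if they are $P$-comparable with disjoint intervals, condition (P2) iterated yields a covering chain of overlapping blocks between them, so at least one intermediate block must lie between $B$ and $B'$ in any linear extension and the pair is never actually consecutive. Lemma~\ref{inversionint} is the forward counterpart of this last observation, confirming that descending runs in any given $\pi$ exhibit exactly the overlap-chain structure encoded by (P2), which is crucial for the acyclicity check. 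Once the descending-run structure of $\pi$ reproduces the blocks of $P$, matching $\mu(\pi)$ with $P$ follows directly from the definition of $\mu$.
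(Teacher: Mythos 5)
Your treatment of injectivity and of the containment $\mu(S_n)\subseteq\Omega$ is fine and is essentially what the paper does (it cites Proposition~\ref{muiscomp} for the former and declares the latter clear from the definition of $\mu$). For surjectivity, however, you take a genuinely different route, and that route has a gap. The paper proves $\Omega\subseteq\mu(S_n)$ geometrically: given $\omega\in\Omega$, it exhibits $\omega$ as the transitive closure of a union of single-shard pre-orders $\omega_{is}\in\Omega^{\Sigma}$, one for each block $B_{[i,k]}$ and each $s\in B_{[i,k]}$ with $i<s$, using (P1) to see that each $\omega_{is}$ is well defined and (P2) to recover all covering relations of $\omega$. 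This realizes $\omega$ as an intersection of shards, hence as $\mu$ of some permutation, with no need to construct that permutation explicitly. You instead build the inverse map directly (essentially the map $\lambda$ that the paper only introduces afterwards), which is workable in principle but shifts all of the difficulty onto the acyclicity of your augmented relation.

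That acyclicity is exactly where your argument is incomplete. You correctly call it ``the main obstacle,'' but you never prove it, and the lemma you point to, Lemma~\ref{inversionint}, cannot do the job: that lemma concerns descending runs of a permutation $\pi$ that is already in hand, whereas here you must establish acyclicity for an abstract $P\in\Omega$ before any permutation exists. Your augmented relation is a tournament on the blocks (every pair is oriented, either by $\prec$ or by the interval rule), so acyclicity reduces to excluding directed $3$-cycles, and that exclusion is a nontrivial case analysis --- precisely the content of the paper's Lemma~\ref{linear}. The delicate case is a triple in which $B_{[i,j]}\prec B_{[k,l]}$ have disjoint intervals and $B_{[s,t]}$ is incomparable to both: one must invoke (P2) to produce a chain of overlapping blocks joining $B_{[i,j]}$ to $B_{[k,l]}$ and conclude that $[s,t]$ lies entirely to one side of $[\min(i,k),\max(j,l)]$, so that the interval rule orients $B_{[s,t]}$ consistently with respect to both. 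Without this argument your total order may fail to exist, and the rest of the construction (which is otherwise sound --- your three-case check that consecutive blocks do not merge is correct) has nothing to stand on.
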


\begin{proof}
By Proposition \ref{muiscomp}, $\mu$ is a bijection onto its image. All that remains to be shown is that $\Omega$ is the image of $\mu$. It is clear from the definition of $\mu$ that the image of $\mu$ is contained in $\Omega$. Thus we will prove that $\Omega$ is contained in the image of $\mu$ by showing that every $\omega\in\Omega$ is an intersection of shards.\\
\indent Let $\omega\in\Omega$. It is sufficient to find a set of pre-orders in $\Omega^{\Sigma}$ such that the transitive closure of the unions of these pre-orders is $\omega$. For each $B_{[i,k]}$ in $\omega$ consider each $s\in B_{[i,k]}$ with $i<s$. Define $\omega_{is}$ to be the pre-order on $[n]$ with one two element block $B_{[i,s]}$ and the remaining elements in singleton blocks $B_{[v,v]}$ for $v\in [n]\setminus \{i,s\}$ with the following order relations:\\

(1) for $v\in [i,s]: \begin{cases} \text{if}\ i\preceq v\ \text{in}\ \omega, \text{then}\ i\preceq v\ \text{in}\ \omega_{is} \\
\text{if}\ i\succeq v\ \text{in}\ \omega, \text{then}\ i\succeq v\ \text{in}\ \omega_{is} \\
\text{if}\ i\equiv v\ \text{in}\ \omega, \text{then}\ i\preceq v\ \text{in}\ \omega_{is} \end{cases}$ \\
\indent ((P1) rules out the possibility that $i$ and $v$ are incomparable.)\\

\indent (2) for $v\notin [i,s]$: $v$ is incomparable in $\omega_{is}$ to $x$ for all $x\in [n]\setminus \{v\}$.\\ 

\indent Clearly, $\omega_{is}\in \Omega^{\Sigma}$. Taking the transitive closure of unions of $\omega_{is}$ for each pair $i,s$ we obtain all the blocks of $\omega$. (P2) guarantees that all of the covering relationships of $\omega$ will be obtained as well. Thus, taking the transitive closure of unions of the set of pre-orders $\omega_{is}$ (for each pair $i,s$ described above) gives $\omega$. 
\end{proof}

\begin{figure}[ht]
\[\omega\,=\raisebox{-27 pt}{\includegraphics{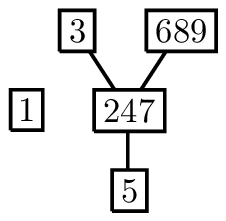}}\]\vspace{1em}
\begin{tabular}{lll}
$\omega_{24}\,=\raisebox{-27 pt}{\includegraphics{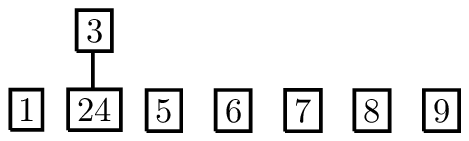}}$& \phantom{\hspace{.5cm}}&
$\omega_{27}\,=\raisebox{-27 pt}{\includegraphics{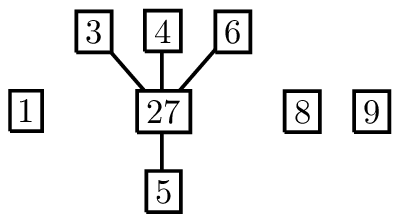}}$\\ 
$\omega_{68}\,=\raisebox{-27 pt}{\includegraphics{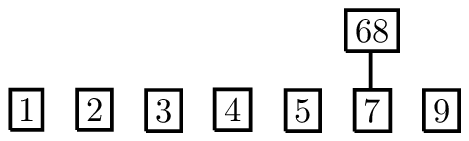}}$& \hspace{.4cm}&
$\omega_{69}\,=\raisebox{-27 pt}{\includegraphics{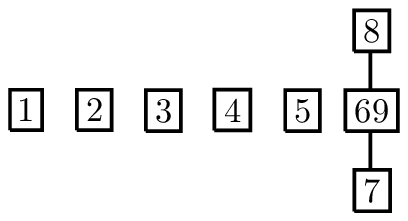}}$
\end{tabular}
\caption{}\label{ptp proof}
\end{figure}

\begin{example}
Figure \ref{ptp proof} illustrates the proof of Proposition \ref{permtopreorder}.
\end{example}

We now describe the inverse map to $\mu$. Define $\lambda: \Omega \to S_n$ as follows: given $\omega \in \Omega$, we define a permutation $\pi=\lambda(\omega)$ such that each block in $\omega$ is a descending run in $\pi$. For any blocks $B_1,B_2\in \omega$ if $B_1\prec B_2$ in $\omega$, then the descending run containing the elements of $B_2$ is to the right of the descending run containing the elements of $B_1$ in $\pi$. For any two incomparable blocks $B_{[i,j]},B_{[k,l]}\in \omega$ (by (P1) $\bk{i}{j}$ and $\bk{k}{l}$ do not overlap, thus without loss of generality $j<k$) the descending run containing the elements of $B_{[k,l]}$ is to the right of the descending run containing the elements of $B_{[i,j]}$ in $\pi$.

\begin{figure}[ht]
\[\raisebox{-27 pt}{\includegraphics{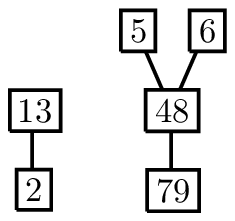}}
\]
\caption{}\label{exoflambda}
\end{figure}

\begin{example}
Suppose $\omega$ is the permutation pre-order shown in Figure \ref{exoflambda}. Then $\lambda(\omega)=231978456$. 
\end{example}

Next we give a lemma establishing that $\lambda$ is indeed a permutation and conclude the section with the proof that $\lambda=\mu^{-1}$.

\begin{lemma}\label{linear}
For any permutation pre-order $\omega$, the image $\lambda(\omega)$ is a permutation. 
\end{lemma}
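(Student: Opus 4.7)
My plan is to invoke Proposition \ref{permtopreorder}, which establishes that $\mu\colon S_n\to\Omega$ is a bijection, and to set $\pi=\mu^{-1}(\omega)$. I will then verify that $\pi$ satisfies the three defining conditions of $\lambda(\omega)$ and that those conditions determine the permutation uniquely. The first condition, that each block of $\omega$ is a descending run of $\pi$, is immediate from the definition of $\mu$. For the second condition, given $B_1\prec B_2$ in $\omega$, I would choose a chain of covers $B_1=C_0\precdot C_1\precdot\cdots\precdot C_m=B_2$; by (P2) each consecutive pair $C_i,C_{i+1}$ overlaps as blocks, and the definition of $\mu$ places $C_i$'s run to the left of $C_{i+1}$'s in $\pi$. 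Composing these inequalities puts $B_1$'s run to the left of $B_2$'s.

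The third condition will be the main obstacle. Suppose $\bk{i}{j}$ and $\bk{k}{l}$ are incomparable in $\omega$, with $j<k$ available by (P1). For contradiction, assume the run of $\bk{k}{l}$ lies to the left of the run of $\bk{i}{j}$ in $\pi$, and pick any $\alpha\in\bk{k}{l}$ and $\beta\in\bk{i}{j}$. Since $\alpha\geq k>j\geq\beta$ and $\alpha$ precedes $\beta$ in $\pi$, the pair $(\alpha,\beta)$ is an inversion whose two descending runs have the disjoint intervals $[k,l]$ and $[i,j]$. Lemma \ref{inversionint} then supplies a chain of pairwise-overlapping descending runs from $\bk{k}{l}$'s run to $\bk{i}{j}$'s run, each consecutive pair comparable in $\omega=\mu(\pi)$ with the left block smaller; concatenating the resulting relations yields $\bk{k}{l}\prec\bk{i}{j}$, contradicting incomparability.

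Uniqueness will follow easily: the first condition fixes the multiset of descending runs of any candidate permutation as the blocks of $\omega$, and by (P1) every pair of blocks is either comparable in $\omega$ or non-overlapping, so the second and third conditions jointly determine the relative order of any two blocks. Thus the defining conditions have the unique solution $\pi=\mu^{-1}(\omega)$, and in particular $\lambda(\omega)\in S_n$.
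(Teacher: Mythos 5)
Your proof is correct, and it does not create a circularity (Proposition \ref{permtopreorder} is established before Lemma \ref{linear} and its proof makes no use of $\lambda$), but it takes a genuinely different route from the paper. The paper proves the lemma internally to the combinatorics of pre-orders: the definition of $\lambda$ specifies, for each pair of blocks, which lies to the left, i.e.\ a tournament on the blocks, and the paper shows this tournament is acyclic by a four-case check that every triple of blocks is ordered consistently, using (P1), (P2) and transitivity of $\omega$. You instead use the surjectivity of $\mu$ onto $\Omega$ to produce a permutation $\pi$ with $\mu(\pi)=\omega$ and verify that $\pi$ realizes every pairwise constraint in the definition of $\lambda$, so the tournament is realized by a linear order and is therefore acyclic; uniqueness then identifies $\lambda(\omega)$ with $\pi$. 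What your approach buys is that it proves Proposition \ref{inverse} ($\lambda=\mu^{-1}$) essentially for free --- indeed your verification of the comparable case via a chain of covers and of the incomparable case via Lemma \ref{inversionint} is, in substance, the paper's proof of Proposition \ref{inverse} (its Cases 2b and 2a respectively), transplanted into the lemma. What the paper's approach buys is self-containment: it needs only the axioms (P1)--(P2) and no appeal to the shard-intersection machinery underlying Proposition \ref{permtopreorder}. Two small points you should make explicit, though the paper relies on the same readings: for two \emph{distinct} overlapping blocks of $\mu(\pi)$ the direction of the generating relation is forced by position (so $C_i\prec C_{i+1}$ with overlap really does force $C_i$'s run to the left, since the opposite position would make the blocks equivalent), and the chain supplied by Lemma \ref{inversionint} must be taken in left-to-right order in $\pi$ so that the generated relations compose in one direction.
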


\begin{proof}
The definition of $\lambda(\omega)$ defines a directed graph on the blocks of $\omega$ such that any two blocks are connected by a single directed edge. We will show that the digraph is acyclic, meaning that we have defined a total order on the blocks of $\omega$ and $\lambda(\omega)$ is a permutation. It is enough to show that any set of three blocks from $\omega$ is acyclically ordered by $\lambda$. Suppose $B_{[i,j]}, B_{[k,l]}, B_{[s,t]}\in \omega$. There are four cases to consider.\\
\emph{Case 1}: If $B_{[i,j]}, B_{[k,l]}$, and $B_{[s,t]}$ are all incomparable in $\omega$, then $[i,j], [k,l]$ and $[s,t]$ are pairwise disjoint, and $\lambda$ orders them numerically based on the positions of $[i,j], [k,l]$ and $[s,t]$ in $\mathbb{R}$. Thus, they will be ordered acyclically.\\ 
\emph{Case 2}: Suppose exactly two blocks are comparable. Without loss of generality let $\bk{i}{j} \prec \bk{k}{l}$ and let $\bk{s}{t}$ be incomparable to both $\bk{i}{j}$ and $\bk{k}{l}$. We claim that either $t<i$ and $t<k$ or $s>j$ and $s>l$. Since $B_{[s,t]}$ is incomparable to both $B_{[i,j]}$ and $B_{[k,l]}$ in $\omega$, we know that $[s,t]\cap[i,j]=\emptyset$ and $[s,t]\cap[k,l]=\emptyset$. There are two subcases to consider: either $[i,j]\cap[k,l]\neq\emptyset$ or $[i,j]\cap[k,l]=\emptyset$.\\
\indent \emph{Case 2a}: If $[i,j]\cap[k,l]\neq\emptyset$, then it follows that $[s,t]\cap[\min(i,k),\max(j,l)]=\emptyset$ and the claim is true.\\
\indent \emph{Case 2b}: If $[i,j]\cap[k,l]=\emptyset$, then by (P2) there exists a chain of covers $\bk{i}{j}\precdot B_1\linebreak \precdot B_2 \precdot \cdots \precdot B_n \precdot \bk{k}{l}$ in $\omega$ such that $\bk{i}{j}$ and $B_1$ overlap, $B_n$ and $\bk{k}{l}$ overlap, and $B_x$ and $B_{x+1}$ overlap for $1\leq x \leq n-1$. Since $\bk{s}{t}$ is incomparable to both $\bk{i}{j}$ and $\bk{k}{l}$, it must be incomparable to $B_x$ for $x\in[n]$. Therefore $B_x$ and $\bk{s}{t}$ do not overlap for $x\in[n]$. Again it follows that $[s,t]\cap[\min(i,k),\max(j,l)]=\emptyset$ and the claim is true.\\ 
\indent Since the claim is true, the definition of $\lambda$ implies that $\bk{s}{t}$ is to the left or to the right of both $\bk{i}{j}$ and $\bk{k}{l}$ in $\lambda(\omega)$. Thus the blocks will be ordered acyclically by $\lambda$.\\
\emph{Case 3}: Suppose two pairs of blocks are comparable and one pair is incomparable. Without loss of generality, either $\bk{s}{t}\prec \bk{i}{j}$ and $\bk{s}{t}\prec \bk{k}{l}$ or $\bk{i}{j}\prec\bk{s}{t}$ and $\bk{k}{l}\prec\bk{s}{t}$, with $\bk{i}{j}$ and $\bk{k}{l}$ incomparable in both instances. In the first instance, by the definition of $\lambda$, $\bk{s}{t}$ is to the left of both $\bk{i}{j}$ and $\bk{k}{l}$ in $\lambda$. Since $\bk{i}{j}$ and $\bk{k}{l}$ are incomparable in $\omega$, $[i,j]\cap[k,l]=\emptyset$ and the blocks $\bk{i}{j}$ and $\bk{k}{l}$ are ordered numerically in $\lambda$ based on the positions of $[i,j]$ and $[k,l]$ in $\mathbb{R}$. Thus the three blocks will be ordered acyclically by $\lambda$. The second instance follows similarly, and again the three blocks will be ordered acyclically by $\lambda$.\\
\emph{Case 4}: Finally, if $B_{[i,j]}, B_{[k,l]}$ and $B_{[s,t]}$ are all comparable in $\omega$, then by the transitivity of $\omega$ they must be ordered acyclically.\\
Therefore, $\lambda(\omega)$ defines a total order on the blocks of $\omega$, which corresponds to a unique permutation $\pi$.
\end{proof}

\begin{proposition}\label{inverse}
$\lambda=\mu^{-1}$
\end{proposition}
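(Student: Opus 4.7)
The plan is to show $\lambda \circ \mu = \mathrm{id}_{S_n}$. Since Proposition~\ref{permtopreorder} already establishes $\mu$ as a bijection, this identity will imply $\lambda = \mu^{-1}$.

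Fix $\pi \in S_n$. By the definition of $\mu$, the blocks of $\mu(\pi)$ are exactly the descending runs of $\pi$, viewed as sets. By the definition of $\lambda$, the permutation $\lambda(\mu(\pi))$ has those same sets as its own descending runs. So it suffices to verify that the descending runs appear in the same left-to-right order in $\lambda(\mu(\pi))$ as they do in $\pi$.

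Let $D$ and $D'$ be descending runs of $\pi$ with $D$ appearing to the left of $D'$. Every direct order relation introduced by $\mu$ points from an earlier run to a later run, so if $D$ and $D'$ (viewed as blocks of $\mu(\pi)$) are comparable in $\mu(\pi)$, then necessarily $D \prec D'$, and $\lambda$ places $D$ to the left of $D'$. If instead $D$ and $D'$ are incomparable in $\mu(\pi)$, then in particular they do not overlap as blocks, so their intervals $[\min D,\max D]$ and $[\min D',\max D']$ are disjoint. I claim $\max D < \min D'$, in which case $\lambda$ again places $D$ to the left of $D'$ by its numerical rule for incomparable blocks.

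Suppose for contradiction that $\max D' < \min D$. Then for any choice of $i \in D$ and $j \in D'$ the pair $(i,j)$ is an inversion of $\pi$. Applying Lemma~\ref{inversionint} to such an inversion produces a chain of descending runs positioned between $D$ and $D'$ in $\pi$, with consecutive runs overlapping and with the first and last runs of the chain overlapping $D$ and $D'$ respectively. Each overlap yields a direct order relation in $\mu(\pi)$, so transitivity forces $D \prec D'$, contradicting incomparability. The main obstacle is precisely this final step: ruling out the ``reversed'' configuration of disjoint intervals is exactly what Lemma~\ref{inversionint} was prepared to handle, and everything else is a routine unwinding of the definitions of $\mu$ and $\lambda$.
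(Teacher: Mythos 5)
Your proof is correct and takes essentially the same route as the paper: both reduce the statement to showing $\lambda(\mu(\pi))=\pi$ by verifying that the relative order of any two descending runs is preserved, with Lemma~\ref{inversionint} invoked in exactly the same role to rule out the reversed configuration of disjoint intervals. Your reorganization of the case analysis (comparable vs.\ incomparable blocks, rather than overlapping vs.\ disjoint intervals with two subcases) is only a cosmetic difference.
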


\begin{proof}
Since we know that $\mu$ is a bijection and we proved in Lemma \ref{linear} that $\lambda$ is well-defined, it is enough to show that $\lambda(\mu(\pi))=\pi$ to complete the proof. This is equivalent to showing that the relative positions of any two descending runs in $\pi$ remain the same in $\lambda(\mu(\pi))$. Descending runs in $\pi$ are preserved in $\lambda(\mu(\pi))$ because $\mu$ converts the descending runs to blocks and $\lambda$ converts each block back to the original descending run. (The descending runs do not increase in length since covering pairs $\bk{i}{j}$ and $\bk{k}{l}$ have $[i,j]\cap [k,l]\neq \emptyset$ and $\lambda$ places incomparable blocks in increasing order.) Let $i_1i_2\cdots i_s$ and $j_1j_2\cdots j_t$ be descending runs in $\pi$ with $\pi=\cdots i_1i_2\cdots i_s\cdots j_1j_2\cdots j_t\cdots$. Each of the descending runs in $\pi$ will become blocks in $\mu(\pi)$, so $B_{[i_s,i_1]}$ and $B_{[j_t,j_1]}$ are blocks in $\mu(\pi)$. There are two cases to consider.\\ 
\emph{Case 1}: Suppose $[i_s,i_1] \cap [j_t,j_1] \neq \emptyset$. Then $B_{[i_s,i_1]} \prec B_{[j_t,j_1]}$ in $\mu(\pi)$ and $\lambda$ will place the descending run $i_1i_2\cdots i_s$ to the left of the descending run $j_1j_2\cdots j_t$ in $\lambda(\mu(\pi))$. \\
\emph{Case 2}: Suppose $[i_s,i_1] \cap [j_t,j_1] = \emptyset$. Then we have two subcases to consider: either $i_x>j_y$ for all $x\in [s]$ and $y\in [t]$ or $i_x<j_y$ for all $x\in [s]$ and $y\in [t]$.\\ 
\indent \emph{Case 2a}: Suppose $i_x>j_y$ for all $x\in[s]$ and $y\in[t]$. Then by Lemma \ref{inversionint}, $B_{[i_s,i_1]}\prec B_{[j_t,j_1]}$ in $\mu(\pi)$ and $\lambda$ will place the descending run $i_1i_2\cdots i_s$ to the left of the descending run $j_1j_2\cdots j_t$ in $\lambda(\mu(\pi))$.\\
\indent \emph{Case 2b}: Suppose $i_x<j_y$ for all $x\in[s]$ and $y\in[t]$. For the purpose of contradiction, suppose that the descending run $i_1i_2\cdots i_s$ is to the right of the descending run $j_1j_2\cdots j_t$ in $\lambda(\mu(\pi))$. Since $i_x<j_y$ for $x\in[s]$ and $y\in[t]$, the descending runs were not ordered based on their positions in $\mathbb{R}$ implying that $\bk{j_t}{j_1}\prec\bk{i_s}{i_1}$ in $\mu(\pi)$. Since $[i_s,i_1] \cap [j_t,j_1] = \emptyset$, we know that $\bk{j_t}{j_1}$ is not covered by $\bk{i_s}{i_1}$. Thus we have a chain of covers $\bk{j_t}{j_1}\precdot B_1\precdot B_2\precdot \cdots \precdot B_n\precdot \bk{i_s}{i_1}$ in $\mu(\pi)$ where $n\geq 1$. Since $\bk{j_t}{j_1}\precdot B_1$ in $\mu(\pi)$, the descending run $j_1\cdots j_t$ must be to the left of the descending run containing the elements of $B_1$ in $\pi$. Similarly, the descending run containing the elements of $B_x$ must be to the left of the descending run containing the elements of $B_{x+1}$ in $\pi$ for $1\leq x \leq n-1$ and the descending run containing the elements of $B_n$ must be to the left of the descending run $i_1\cdots i_s$ in $\pi$. Thus $j_1j_2\cdots j_t$ is to the left of $i_1i_2\cdots i_s$ in $\pi$, which is a contradiction. Therefore the descending run $i_1i_2\cdots i_s$ is to the left of the descending run $j_1j_2\cdots j_t$ in $\lambda(\mu(\pi))$.
\end{proof}

\section{The Shard Intersection Order on $S_n$}\label{sectintord}

In this section we describe the shard intersection order on $S_n$ in terms of an order on the permutation pre-orders. We then define a labeling for the order and spend the majority of the section proving various lemmas concerning cover relations and the labeling. The section, and the paper, culminate in the proof that the given labeling is an EL-labeling of the shard intersection order on $S_n$.\\
\indent The \emph{shard intersection order} is the set $\Psi(\mathcal{A},B)$ partially ordered by reverse containment. It was shown in \cite{nR09} to be a graded, atomic and coatomic lattice and is denoted as $(\Psi(\mathcal{A},B),\supseteq)$. We will continue considering the case where $W=S_n$. Thus $(\Psi(\mathcal{A}(S_n),B),\supseteq)$ can be realized as the permutation pre-orders partially ordered by containment of relations, which we will denote as $(\Omega,\leq_S)$. To avoid confusion, we remind the reader that the three order relations appearing in the paper are the total order $\leq$ on real numbers, the partial order $\preceq$ on blocks in a given permutation pre-order and the order relation $\leq_S$ on permutation pre-orders in $(\Omega,\leq_S)$. The minimal element $\hat{0}$ is the empty intersection, which is the permutation pre-order with each number in a singleton block and no order relations among the blocks. The maximal element $\hat{1}$ is the permutation pre-order with all numbers together in one block. In \cite{nR09} it was shown that the rank of $\Gamma\in\Psi$ is the codimension of $\Gamma$. From this, one easily deduces the following result for permutation pre-orders. The notation $|\omega|$ stands for the number of blocks in $\omega$.

\begin{proposition}
The rank of $\omega\in\Omega$ in the graded lattice $(\Omega,\leq_S)$ is $n -|\omega|$. 
\end{proposition}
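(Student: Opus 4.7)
The plan is to reduce the statement directly to the cited fact that the rank of $\Gamma\in\Psi$ equals the codimension of $\Gamma$, and then compute that codimension from the block structure of $\omega$.

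First, fix $\omega\in\Omega$ and let $\Gamma\in\Psi$ be the shard intersection corresponding to $\omega$ under the bijection described just before Proposition \ref{muiscomp}. By construction of the injection $\Psi\to\mathcal{P}$, the equality $x_i=x_j$ holds on $\Gamma$ if and only if $i$ and $j$ lie in the same block of $\omega$. Consequently, $\Gamma$ is contained in the linear subspace
\[
U_\omega \;=\; \bigl\{\vec{x}\in\mathbb{R}^n : x_i=x_j \text{ whenever } i\equiv j \text{ in } \omega\bigr\},
\]
and since the remaining conditions defining $\Gamma$ are \emph{weak} linear inequalities that do not force any additional equalities (the correspondence sends $i\preceq j\preceq i$ to equality, and any other pair yields at most a one-sided inequality), the affine span of $\Gamma$ is exactly $U_\omega$.

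Next, I would compute $\dim U_\omega$. Since the blocks partition $[n]$, one free coordinate per block parameterizes $U_\omega$, so $\dim U_\omega=|\omega|$ and hence $\mathrm{codim}\, \Gamma = n-|\omega|$. Invoking the cited result from \cite{nR09} that the rank of $\Gamma$ in $(\Psi,\supseteq)$ equals $\mathrm{codim}\,\Gamma$, and using that the bijection from Proposition \ref{permtopreorder} (together with Proposition \ref{muiscomp}) turns $(\Psi,\supseteq)$ into $(\Omega,\leq_S)$ as a graded poset, we conclude that the rank of $\omega$ is $n-|\omega|$.

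The only point requiring care is the second claim in the paragraph above: that no combination of the weak inequalities defining $\Gamma$ collapses it into a proper subspace of $U_\omega$, i.e.\ that $\Gamma$ really does contain a relative interior point of $U_\omega$. I expect this to be the main (though still routine) obstacle. To handle it I would exhibit such a point: pick real numbers $t_{B}$, one per block $B$ of $\omega$, chosen to respect the strict version of the partial order $\prec$ on blocks (i.e.\ $t_{B_1}<t_{B_2}$ whenever $B_1\prec B_2$, and $t_{B_1}\neq t_{B_2}$ for incomparable blocks), and assign $x_v=t_{B(v)}$. The conditions of Proposition \ref{permtoshardint}, reinterpreted via the definition of $\mu$ in terms of overlapping descending runs/blocks, are then satisfied strictly except on the equalities $x_i=x_j$ for $i\equiv j$; such a point lies in the relative interior of $U_\omega\cap\Gamma$, confirming that $\Gamma$ has full dimension $|\omega|$ inside $U_\omega$ and completing the proof.
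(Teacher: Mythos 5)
Your proposal is correct and follows the same route the paper intends: the paper simply cites the fact that rank equals codimension in $\Psi$ and asserts the deduction is easy, and your argument spells out exactly that deduction (codimension of the corresponding cone is $n$ minus the number of blocks). Your extra care in exhibiting a relative interior point via a strict linear extension of the block poset properly justifies that the inequalities do not drop the dimension below $|\omega|$, which is the only detail the paper leaves implicit.
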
  

To go up by a cover from a shard intersection $\Gamma$ in $(\Psi(\mathcal{A},B),\supseteq)$ we intersect $\Gamma$ with one additional shard $\Sigma$, chosen so that $\dim(\Sigma\cap\Gamma)=\dim(\Gamma)-1$. Thus to go up by a cover from $\omega$ in $(\Omega,\leq_S)$, any two blocks which are unrelated or related by a cover can be combined. All relations from $\omega$ are maintained in the permutation pre-order that covers it, and relations involving either of the combined blocks from $\omega$ become an order relation involving the new block in the cover. If by combining two unrelated blocks in $\omega$ an interval is formed which now intersects the intervals of other unrelated blocks, then the newly overlapping blocks may be greater than or less than the combined block in the cover. \\

\begin{figure}[ht]
\[\raisebox{-27 pt}{\includegraphics{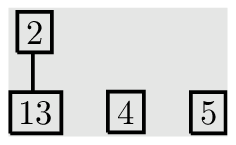}}\]
\[\omega\]\\
\vspace{.75cm}
\begin{tabular}{ccccccc}
$\raisebox{-27 pt}{\includegraphics{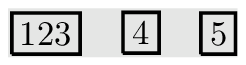}}$&\hspace{.2cm} &
$\raisebox{-27 pt}{\includegraphics{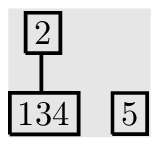}}$&\hspace{.5cm}&
$\raisebox{-27 pt}{\includegraphics{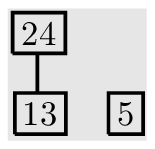}}$&\hspace{.5cm}&
$\raisebox{-27 pt}{\includegraphics{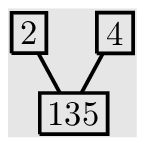}}$\\
\footnotesize{\textsc{(a)}}&\hspace{.2cm}&\footnotesize{\textsc{(b)}}&\hspace{.5cm}&\footnotesize{\textsc{(c)}}&\hspace{.5cm}&\footnotesize{\textsc{(d)}}
\vspace{.5cm}\\
$\raisebox{-27 pt}{\includegraphics{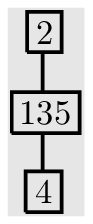}}$&\hspace{.2cm}&
$\raisebox{-27 pt}{\includegraphics{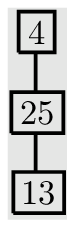}}$&\hspace{.5cm}&
$\raisebox{-27 pt}{\includegraphics{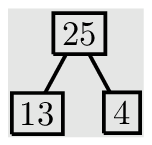}}$&\hspace{.5cm}&
$\raisebox{-27 pt}{\includegraphics{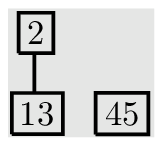}}$\\
\footnotesize{\textsc{(e)}}&\hspace{.2cm}&\footnotesize{\textsc{(f)}}&\hspace{.5cm}&\footnotesize{\textsc{(g)}}&\hspace{.5cm}&\footnotesize{\textsc{(h)}}
\end{tabular}
\caption{}\label{fig:covers}
\end{figure}

\begin{example}
Figure \ref{fig:covers} shows a permutation pre-order labeled $\omega$ and the eight permutation pre-orders labeled \textit{(A)} through \textit{(H)} which cover it. Notice that if we combine blocks $\bk{1}{3}$ and $\bk{5}{5}$ in $\omega$, blocks $\bk{1}{5}$ and $\bk{4}{4}$ overlap in the permutation pre-order which covers $\omega$. By (P2) this means that $\bk{1}{5}$ and $\bk{4}{4}$ must be related. Since $\bk{4}{4}$ is unrelated to blocks $\bk{1}{3}$ and $\bk{5}{5}$ in $\omega$ there is no restriction on the direction of this relation, so $\bk{4}{4}$ can be either greater than or less than $\bk{1}{5}$. Thus we obtain the two permutation pre-orders \textit{(D)} and \textit{(E)}. Each of these pre-orders arise by intersecting $\omega$ with a permutation pre-order corresponding to a single shard in $\Omega^{\Sigma}$ with $3$ and $5$ together in one block.\\
\indent In $\omega$ any two blocks can be combined to form a permutation pre-order which covers $\omega$. On the other hand, in the permutation pre-order labeled \textit{(F)}, blocks $\bk{1}{3}$ and $\bk{4}{4}$ cannot be combined to form a cover since this would force all numbers to be in one block and the rank would go up by two instead of one.
\end{example}

The main result of this paper is an edge-lexicographic or EL-labeling \cite{aB80,aB83} of $(\Omega,\leq_S)$. A sequence $a_1a_2\cdots a_n$ is \emph{lexicographically smaller} than a sequence $b_1b_2\cdots b_n$ if there exists a $j\in[n]$ such that $a_i=b_i$ for all $i<j$ but $a_j<b_j$. Define a labeling of the poset to be an \emph{EL-labeling} if the edges of the poset are labeled with positive integers such that the following two conditions hold.
\begin{enumerate}
\item[(EL1)] Each interval $(\omega,\omega')$ in the poset has a unique maximal chain $\omega=\omega_0\lessdot_S\omega_1\lessdot_S\cdots\lessdot_S\omega_k=\omega'$ whose edge label sequence, $\sigma(\omega_0,\omega_1)\sigma(\omega_1,\omega_2)\cdots\linebreak\sigma(\omega_{k-1},\omega_k)$ is weakly increasing in value, and
\item[(EL2)] this weakly increasing edge label sequence is lexicographically smaller than the label sequences of all other maximal chains from $\omega$ to $\omega'$.
\end{enumerate}

Given a permutation pre-order $\omega$, number the descending runs in $\lambda(\omega)$ from $1$ to $|\omega|$, proceeding from left to right. Define this number to be the \emph{placement} of the descending run $i_1\cdots i_n$ in $\lambda(\omega)$ or the \emph{placement} of its corresponding block $\bk{i_n}{i_1}$ in $\omega$. We will denote the placement of a block $B$ in $\omega$ as $pl_{\omega}(B)$. The total order on the blocks of $\omega$ defined by the placements is equivalent to the total order defined by $\lambda$. Also, if $\bk{i}{j}\prec\bk{k}{l}$ in $\omega$, then $pl_{\omega}(\bk{i}{j})<pl_{\omega}(\bk{k}{l})$ since the descending run corresponding to $\bk{i}{j}$ is to the left of the descending run corresponding to $\bk{k}{l}$ in $\lambda(\omega)$.\\
\indent For $(\Omega,\leq_S)$ we will describe a labeling $\sigma$. Let $\sigma(\omega_i,\omega_{i+1})=$ the maximum of the placements of the two blocks from $\omega_i$ that were combined to form $\omega_{i+1}$. We will prove the following theorem.

\begin{theorem}\label{ELlabel}
$\sigma$ is an EL-labeling. 
\end{theorem}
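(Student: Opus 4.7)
My plan is to prove Theorem~\ref{ELlabel} by constructing, for every interval $[\omega,\omega']$ in $(\Omega,\leq_S)$, a canonical maximal chain greedily, and then verifying that it is the unique weakly increasing chain and is lexicographically smallest. I will begin by translating $\omega \leq_S \omega'$ into the block language: every block of $\omega'$ is a union of blocks of $\omega$, and the partial order of $\omega'$ on its blocks extends that of $\omega$. Combined with the cover description given in Section~\ref{sectintord}, this shows that a cover $\omega_i \lessdot_S \omega_{i+1}$ lies in $[\omega_i,\omega']$ precisely when the two merged $\omega_i$-blocks sit inside a common $\omega'$-block and the directions of any forced relations are consistent with $\omega'$.

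Second, I will construct the candidate chain greedily by selecting, at each step, the cover in $[\omega_i,\omega']$ of smallest label. The key structural lemma is that this minimum-label cover is unique. Writing $q_i$ for the smallest placement of an $\omega_i$-block that shares its $\omega'$-block with a block of strictly smaller placement, minimality of $q_i$ forces every block at placements $1,\ldots,q_i-1$ to be the only member of its $\omega'$-block among these $q_i-1$ blocks. Thus the block $B_{q_i}$ at placement $q_i$ has a unique partner $B_p$ (with $p<q_i$) in the same $\omega'$-block, and the merge of $B_p$ with $B_{q_i}$ is the unique cover in $[\omega_i,\omega']$ having label $q_i$.

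Third, I will show that the greedy chain is weakly increasing, is the unique weakly increasing chain in $[\omega,\omega']$, and is lexicographically smallest. For weak increase, I argue that after the greedy step, the low-placement blocks of $\omega_{i+1}$ (which are either inherited from $\omega_i$ or equal to $B_p \cup B_{q_i}$) remain pairwise in distinct $\omega'$-blocks, so by the same minimality analysis the next greedy label is again at least $q_i$. For uniqueness of the weakly increasing chain I argue by induction on chain length: suppose two weakly increasing chains in $[\omega,\omega']$ agree for the first $i$ steps; at step $i+1$, both have label at least $q_i$, and if either uses label exactly $q_i$ the uniqueness of the min-label cover forces agreement, while if one uses a strictly larger label the chain is eventually obstructed, because the pair $B_p,B_{q_i}$ in its current incarnation must still be merged along the chain, yielding a later label no larger than $q_i$ and violating weak increase. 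Lex-minimality (EL2) then follows from a standard exchange argument using the uniqueness of the min-label cover at each step.

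The main obstacle I anticipate is the weak-increase step and the ``eventual obstruction'' part of the uniqueness argument: when the greedy merge is performed, placements in $\omega_{i+1}$ can shift and newly forced relations can reorder blocks, so careful bookkeeping is needed to show both that no block-pair at placements below $q_i$ in $\omega_{i+1}$ shares an $\omega'$-block and that deviating from the greedy choice always strands a low-label merge which cannot fit under weak increase. This combinatorial tracking of placements and forced relations through a merge — in particular pinning down the placement of $B_p \cup B_{q_i}$ in $\omega_{i+1}$ and controlling the positions of the surrounding blocks — is the crux on which both the weak-increase and uniqueness arguments rest.
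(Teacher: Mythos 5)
Your plan follows essentially the same route as the paper: the greedy chain that at each step merges the pair of blocks in $T(\omega_i,\omega')$ with minimal larger placement is exactly the paper's chain $\zeta$; your weak-increase argument and your ``stranded low-label merge'' argument for uniqueness of the increasing chain are the paper's; and EL2 is handled identically. The placement bookkeeping you correctly identify as the crux is carried out in the paper by Proposition~\ref{superprop}, whose parts track how placements shift and which pairs remain combinable after a merge.

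There is, however, one concrete gap in the step you treat as settled. From the uniqueness of the partner $B_p$ you conclude that ``the merge of $B_p$ with $B_{q_i}$ is the unique cover in $[\omega_i,\omega']$ having label $q_i$.'' But in $(\Omega,\leq_S)$ a fixed pair of blocks can be merged to form more than one cover: if a block $B^*$ is incomparable to both $B_p$ and $B_{q_i}$ but its interval meets the interval of their union, then (P1) forces $B^*$ to be comparable to the new block, and it may go either above or below it, giving two distinct covers --- this is exactly what happens with covers \textit{(D)} and \textit{(E)} in Figure~\ref{fig:covers}. So uniqueness of the pair does not by itself give uniqueness of the cover, which you need both for your chain to be well defined and for the claim that no other chain shares its label sequence. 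The paper closes this in the second half of Lemma~\ref{min}: if two such covers both lay below $\omega'$, then $B^*$ would be forced into the same $\omega'$-block as $B_p$ and $B_{q_i}$, and part~(\ref{btwn2}) of Proposition~\ref{superprop} places it strictly between placements $p$ and $q_i$, so the incomparable (hence combinable) pair $(B_p,B^*)$ would contradict minimality of $q_i$. A smaller point: your $q_i$ is defined without reference to combinability, whereas pairs in $T(\omega_i,\omega')$ must be incomparable or related by a cover in $\omega_i$. This turns out to be harmless, but only because if $B_p\prec B_{q_i}$ through a longer chain of covers, the intermediate blocks are squeezed into the same $\omega'$-block at placements below $q_i$, again contradicting minimality; that verification (the first half of the paper's Lemma~\ref{min}) should appear explicitly.
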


\begin{figure}[ht]
\[\raisebox{-27 pt}{\includegraphics{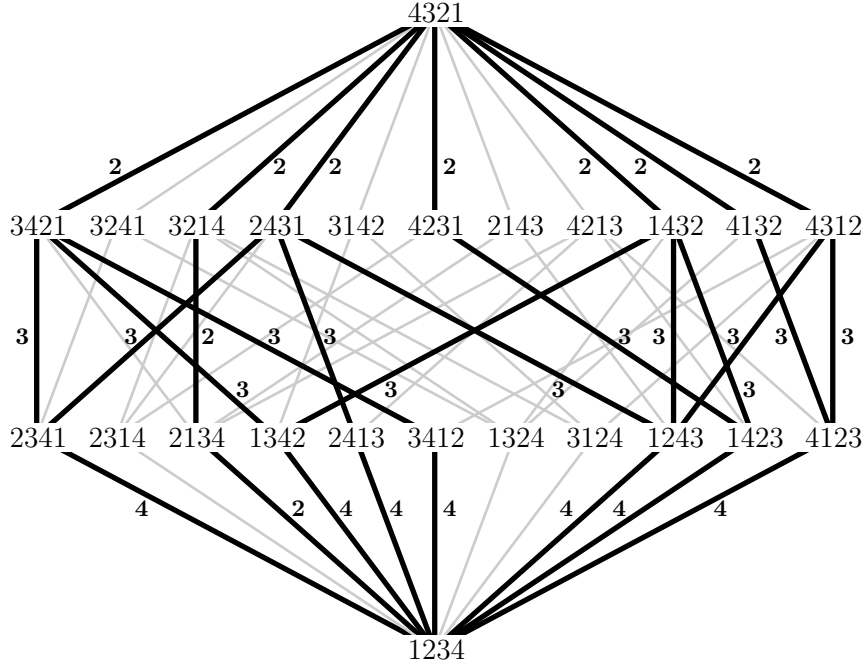}}
\]
\caption{The shard intersection order on $S_4$}\label{exofELlabel}
\end{figure}

\begin{example}  
The unique increasing maximal chain and the thirteen strictly decreasing maximal chains in the shard intersection order on $S_4$ with respect to the labeling $\sigma$ are shown in Figure \ref{exofELlabel}.
\end{example}

The proof of Theorem \ref{ELlabel} requires a proposition and several lemmas concerning relations in $(\Omega,\leq_S)$. Given a permutation pre-order $\omega$ and the interval $(\omega,\omega')$ in $(\Omega,\leq_S)$, we will denote by $T(\omega,\omega')$ the set of pairs of blocks in $\omega$ that could be combined to form a permutation pre-order in $(\omega,\omega')$ which covers $\omega$. A pair of blocks is in $T(\omega,\omega')$ if and only if the pair is in the same block in $\omega'$ and is either incomparable or related by a cover in $\omega$. A pair of blocks in $\omega$ will be called \emph{combinable} if the pair is in $T(\omega,\omega')$.

\begin{proposition}\label{superprop}
Let $\omega$ and $\omega'$ be permutation pre-orders with $\omega\lessdot_S\omega'$. Suppose $B_1$ and $B_2$ are the blocks in $\omega$ that are combined to form a block $B$ in $\omega'$, with $pl_{\omega}(B_1)=a<c=pl_{\omega}(B_2)$. Let $B^*$ be in $\omega$.
\begin{enumerate}
\item If $a<pl_{\omega}(B^*)<c$, then $B^*$ is comparable to $B$ in $\omega'$.

\item\label{btwn2} If $B^*$ is incomparable to $B_1$ and $B_2$ in $\omega$ and comparable to $B$ in $\omega'$, then $a<pl_{\omega}(B^*)<c$.

\item\label{labelorder} If $pl_{\omega}(B^*)<a$, then $pl_{\omega'}(B^*)=pl_{\omega}(B^*)$. If $a\leq pl_{\omega}(B^*)\leq c$, then $a\leq pl_{\omega'}(B^*)\leq c-1$. If $pl_{\omega}(B^*)>c$, then $pl_{\omega'}(B^*)=pl_{\omega}(B^*)-1$.

\item\label{comb} Let $B'$ be a block in $\omega'$ such that $pl_{\omega'}(B')=b<c$. If blocks $B$ and $B'$ are combinable in $\omega'$, then $B'$ is combinable with $B_1$ in $\omega$.

\item\label{staysmall} If $pl_{\omega}(B^*)=b<c$, then $B'$, the block containing $B^*$ in $\omega'$, has $pl_{\omega'}(B')<c$. 
\end{enumerate}
\end{proposition}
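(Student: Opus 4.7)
The plan is to track how the relations and intervals of the blocks transform when $B_1$ and $B_2$ merge into $B$. Two sources create new relations in $\omega'$: first, the forced equivalence $B_1 \equiv B_2$ propagates existing order relations through the merged block $B$ by transitivity; second, the requirement (P1) that overlapping blocks be comparable may create brand-new comparabilities when $B_1$ and $B_2$ are incomparable in $\omega$, since $B$'s interval is the span of the two old intervals and can overlap blocks whose intervals sit in the gap between $B_1$ and $B_2$. Throughout, I would use that $\lambda$ linearly extends the partial order on blocks, with ties broken by interval position: $pl_{\omega}(B^*) < pl_{\omega}(B^{**})$ whenever $B^* \prec B^{**}$ in $\omega$, or the two blocks are incomparable and $B^*$'s interval lies to the left of $B^{**}$'s.

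For (1), I would split on whether $B^*$ is comparable to $B_1$ or $B_2$ in $\omega$; if so, that relation transfers through $B$ in $\omega'$. Otherwise $B^*$'s interval is disjoint from those of both $B_1$ and $B_2$, and the hypothesis $a < pl_{\omega}(B^*) < c$ combined with the $\lambda$ tie-breaking rule forces $B^*$'s interval into the gap between $B_1$'s and $B_2$'s intervals, which lies inside the span that becomes $B$'s interval, so overlap plus (P1) yields comparability (in the subcase $B_1 \precdot B_2$ the gap is empty and the ``incomparable to both'' possibility is vacuous). For (2), the contrapositive: any $\prec$-chain in $\omega'$ from $B^*$ to $B$ unravels to a relation in $\omega$ between $B^*$ and one of $B_1, B_2$, contradicting incomparability, so the new comparability must come from (P1)-overlap with $B$, and the same gap analysis as in (1) forces $a < pl_{\omega}(B^*) < c$.

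For (3), I would first prove a preservation lemma: for blocks $D_k, D_l$ of $\omega$ other than $B_1, B_2$ whose placements both lie below $a$, both above $c$, or one below $a$ and one above $c$, the relative order of $D_k$ and $D_l$ in $\lambda(\omega')$ agrees with that in $\lambda(\omega)$. Any new $\prec$-relation between $D_k, D_l$ in $\omega'$ must be forced through $B$, and tracing such a chain produces inequalities between the $\omega$-placements of $D_k, D_l$ and $a, c$ that are incompatible with reversing the order already present in $\pi$. Counting blocks lying to the left of a given block in $\pi'$ then yields the three placement formulas: the merger removes one block from the count whenever $k > c$, and blocks with $a \leq pl_{\omega} \leq c$ can only rearrange inside the window $\{a,\ldots,c-1\}$ since all blocks outside this window retain their relative side of $B$.

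For (4), split on whether $B$ and $B'$ are incomparable or cover-related in $\omega'$. If incomparable, (P1) forces their intervals to be disjoint, and $B' \prec B_1$ (or $B_1 \prec B'$) in $\omega$ would propagate to $B' \prec B$ (resp.\ $B \prec B'$) in $\omega'$, so $B'$ and $B_1$ must be incomparable in $\omega$. In the case $B' \precdot B$, unraveling $B' \prec B$ gives $B' \prec B_1$ or $B' \prec B_2$ in $\omega$; in the first instance any strict intermediate between $B'$ and $B_1$ in $\omega$ would, via $\omega \leq_S \omega'$, yield a strict intermediate between $B'$ and $B$ in $\omega'$, contradicting cover, so $B' \precdot B_1$; in the second instance, $B_1 \prec B'$ together with $B' \prec B_2$ would give $B_1 \prec B_2$ by transitivity, violating combinability of $B_1, B_2$, so $B'$ and $B_1$ must be incomparable. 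The case $B \precdot B'$ is ruled out because $B \prec B'$ in $\omega'$ forces $B_1 \prec B'$ and $B_2 \prec B'$ in $\omega$, giving $pl_{\omega}(B') > c$ and hence $pl_{\omega'}(B') \geq c$ by (3), contradicting $b < c$. Part (5) then falls out of (3) by cases on whether $B^* \in \{B_1, B_2\}$ or neither, with $B^* = B_2$ ruled out by the hypothesis $b < c$. The main obstacle is the preservation lemma underlying (3): the interplay between transitive closure through $B$ and the tie-breaking rule of $\lambda$ requires careful case analysis to rule out order reversals outside the window $[a,c]$, and once it is in hand the remaining parts reduce largely to bookkeeping.
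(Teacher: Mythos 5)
Your overall strategy---tracking how relations and placements transform when $B_1$ and $B_2$ merge, using (P1)/(P2) and the tie-breaking rule of $\lambda$---is the same as the paper's, and your treatments of (1) and (5) match the paper's essentially step for step. The recurring problem is your accounting of \emph{which} comparabilities are new in $\omega'$. You work with a dichotomy: either a relation in $\omega'$ ``unravels'' to a relation in $\omega$ involving $B_1$ or $B_2$, or it arises from a (P1)-overlap of the block in question with $B$. There is a third possibility: the block may be related in $\omega$ to an \emph{intermediate} block $C$ that newly overlaps $B$, so the new comparability is inherited through $C$ without the original block overlapping $B$ at all. In (2) this means your gap analysis applies to $C$ rather than to $B^*$, and you still must rule out $pl_{\omega}(B^*)<a$; the paper does this by taking the chain of covers from $B^*$ to $B$ in $\omega'$, using (P2) to get a chain of overlapping intervals, and locating a block in the chain whose interval meets $[i,j]$ or $[k,l]$, which forces a relation with $B_1$ or $B_2$ back in $\omega$. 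The same issue makes your key step in (4) false as stated: $B\prec B'$ in $\omega'$ does \emph{not} force $B_1\prec B'$ and $B_2\prec B'$ in $\omega$, since $B'$ could be a block in the gap that is incomparable to both $B_1,B_2$ in $\omega$ and is placed above $B$ by the new (P1) relation; then $pl_{\omega'}(B')$ can well be $<c$, so the case $B\precdot B'$ is not impossible (it is harmless, because such a $B'$ is incomparable to $B_1$ and hence combinable with it, but your argument does not say this).

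Separately, the preservation lemma you propose for (3) is too weak: it compares only pairs of blocks both lying outside the window $[a,c]$, but to conclude $pl_{\omega'}(B^*)=pl_{\omega}(B^*)$ for $pl_{\omega}(B^*)<a$ you must also show that no block with placement strictly between $a$ and $c$ moves to the left of $B^*$ in $\lambda(\omega')$; the justification ``blocks outside the window retain their relative side of $B$'' says nothing about such a pair. The paper's proof instead fixes the block $\bk{u}{v}$ with $pl_{\omega}(\bk{u}{v})<a$ and compares it with \emph{every} other block $\bk{f}{g}$, showing that if the two are incomparable in $\omega$ but comparable in $\omega'$ then necessarily $v<f$ and $\bk{u}{v}\prec\bk{f}{g}$ in $\omega'$, so the left-to-right order is preserved against all blocks simultaneously. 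All of these defects are repairable with the chain-of-overlapping-covers argument you already deploy, but as written the proofs of (2), (3) and (4) do not go through.
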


\begin{proof}
Let $B_1=\bk{i}{j}$ and $B_2=\bk{k}{l}$. Then $B=\bk{s}{t}$ for $s=\min(i,k)$ and $t=\max(j,l)$.\\
\emph{Proof of (1)}: There are two cases to consider. Suppose, in the first case, that $\bk{i}{j}$ and $\bk{k}{l}$ form a cover in $\omega$. Arguing as in Case 2a of Lemma \ref{linear}, any block which is incomparable to both $\bk{i}{j}$ and $\bk{k}{l}$ must be to the left of both $\bk{i}{j}$ and $\bk{k}{l}$ or to the right of both $\bk{i}{j}$ and $\bk{k}{l}$ in $\lambda(\omega)$ and thus does not have a placement between $a$ and $c$. Therefore, all blocks $B^*$ with $a<pl_{\omega}(B^*)<c$ must be comparable to either $\bk{i}{j}$ or $\bk{k}{l}$ in $\omega$. Since all relations from $\omega$ must also be in $\omega'$, the blocks $B^*$ must be comparable to $\bk{s}{t}$ in $\omega'$.\\
\indent In the second case we suppose that $\bk{i}{j}$ and $\bk{k}{l}$ are incomparable in $\omega$. Again, any block $B^*$ with $a<pl_{\omega}(B^*)<c$ which is comparable to either $\bk{i}{j}$ or $\bk{k}{l}$ in $\omega$ must be comparable to $\bk{s}{t}$ in $\omega'$. Thus, all that is left to consider is a block $\bk{p}{q}$ with $a<pl_{\omega}(\bk{p}{q})<c$ which is incomparable to both $\bk{i}{j}$ and $\bk{k}{l}$ in $\omega$. Since the three blocks $\bk{i}{j}, \bk{k}{l}$ and $\bk{p}{q}$ are incomparable in $\omega$, we know that $[i,j], [k,l]$ and $[p,q]$ are pairwise disjoint and the three blocks are ordered in $\lambda(\omega)$ based on the positions of $[i,j], [k,l]$ and $[p,q]$ in $\mathbb{R}$. Given that the placements of the three blocks correspond to their ordering in $\lambda(\omega)$ and $pl_{\omega}(\bk{i}{j})<pl_{\omega}(\bk{p}{q})< pl_{\omega}(\bk{k}{l})$, we can conclude that $i\leq j< p\leq q < k\leq l$. Combining $\bk{i}{j}$ and $\bk{k}{l}$ forms the block $\bk{i}{l}=\bk{s}{t}$, and we know that $[p,q]\subset[i,l]=[s,t]$. Therefore by (P1), $\bk{p}{q}$ must be related to $\bk{s}{t}$ in $\omega'$.\\
\emph{Proof of (2)}:
Suppose for the purpose of contradiction, a block $\bk{f}{g}$ with \linebreak $pl_{\omega}(\bk{f}{g})<a$ is incomparable to both $\bk{i}{j}$ and $\bk{k}{l}$ in $\omega$ and is comparable to $\bk{s}{t}$ in $\omega'$. Since $\lambda$ orders incomparable blocks based on their intervals, $g<i$ and $g<k$. Thus, $g<\min(i,k)$ and $[f,g]\cap[s,t]=\emptyset$. This implies that $\bk{f}{g}$ and $\bk{s}{t}$ do not cover one another in $\omega'$ and therefore must be related by a chain of covers: $\bk{f}{g}\precdot \bk{x_1}{x_2}\precdot \bk{x_3}{x_4} \precdot \cdots \precdot \bk{x_{d-1}}{x_d} \precdot\bk{s}{t}$. By (P2), we have $[f,g]\cap[x_1,x_2]\neq \emptyset, [x_1,x_2]\cap[x_3,x_4]\neq\emptyset, \ldots ,[x_{d-1},x_d]\cap[s,t]\neq\emptyset$. Thus, some interval $[x_m,x_{m+1}]$ for $1\leq m\leq d-1$ must overlap either $[i,j]$ or $[k,l]$. The block $\bk{x_m}{x_{m+1}}$ corresponding to this interval must be less than either $\bk{i}{j}$ or $\bk{k}{l}$ in $\omega$ (whichever it overlaps) because if it were greater than either of them it would be greater than $\bk{s}{t}$ in $\omega'$ and thus would not be part of the chain of covers. Since all blocks except for $\bk{i}{j}$ and $\bk{k}{l}$ are the same in both $\omega$ and $\omega'$, we have that $\bk{f}{g}$ is related to either $\bk{i}{j}$ or $\bk{k}{l}$ by a chain of covers in $\omega$. This contradicts our assumption that $\bk{f}{g}$ is incomparable to both $\bk{i}{j}$ and $\bk{k}{l}$ in $\omega$ and thus $pl_{\omega}(\bk{f}{g})\nless a$. Similarly, if $pl_{\omega}(\bk{f}{g})>c$ we see that $f>\max(j,l)$ and there must exist a chain of covers between $\bk{s}{t}$ and $\bk{f}{g}$ such that one block in the chain has an interval that overlaps either $[i,j]$ or $[k,l]$. Reaching the same contradiction we conclude that $pl_{\omega}(\bk{f}{g})\ngtr c$ and thus $a<pl_{\omega}(\bk{f}{g})<c$.\\
\emph{Proof of (3)}:
Let $\bk{u}{v}$ be a block in $\omega$ with $pl_{\omega}(\bk{u}{v})=x<a$. We first show that $pl_{\omega'}(\bk{u}{v})<pl_{\omega'}(\bk{s}{t})$. Suppose that $\bk{u}{v}$ is incomparable to both $\bk{i}{j}$ and $\bk{k}{l}$. Since $x<a$ and $x<c$, $v<\min(i,k)=s$ and the descending run corresponding to $\bk{s}{t}$ must be to the right of the descending run corresponding to $\bk{u}{v}$ in $\lambda(\omega')$. Now suppose that $\bk{u}{v}$ is comparable to $\bk{i}{j}$ or $\bk{k}{l}$ in $\omega$. Then either $\bk{u}{v}\prec\bk{i}{j}$ or $\bk{u}{v}\prec\bk{k}{l}$, since $x<a$ and $x<c$. All relations from $\omega$ must also hold in $\omega'$ so $\bk{u}{v}\prec\bk{s}{t}$ in $\omega'$. Thus the descending run corresponding to the new block $\bk{s}{t}$ will always be to the right of the descending run corresponding to $\bk{u}{v}$ in $\lambda(\omega')$.\\
\indent We next show that $pl_{\omega}(\bk{u}{v})=pl_{\omega'}(\bk{u}{v})$. Let $\bk{f}{g}$ be any block in $\omega$ other than $\bk{i}{j}, \bk{k}{l}$ and $\bk{u}{v}$. If $\bk{u}{v}$ is incomparable to $\bk{f}{g}$ in both $\omega$ and $\omega'$ then the descending runs corresponding to $\bk{u}{v}$ and $\bk{f}{g}$ must be ordered the same in both $\lambda(\omega)$ and $\lambda(\omega')$, since $\lambda$ orders them based on the positions of $[u,v]$ and $[f,g]$ in $\mathbb{R}$. If $\bk{u}{v}$ is comparable to $\bk{f}{g}$ in $\omega$, then again the descending runs corresponding to each of them must be ordered the same in both $\lambda(\omega)$ and $\lambda(\omega')$, since relations from $\omega$ are preserved in $\omega'$. If $\bk{u}{v}$ is incomparable to $\bk{f}{g}$ in $\omega$ and comparable to it in $\omega'$, then we claim that $v<f$ and  $\bk{u}{v}\prec\bk{f}{g}$ in $\omega'$. Two blocks which are comparable in $\omega'$ and incomparable in $\omega$ can only occur when both blocks are comparable through the new block $\bk{s}{t}$ in $\omega'$. Since we already showed that $\bk{u}{v}$ must be less than $\bk{s}{t}$ in $\omega'$ if they are comparable, we must have $\bk{u}{v}\prec\bk{s}{t}\prec\bk{f}{g}$ in $\omega'$. In order for the relation $\bk{s}{t}\prec\bk{f}{g}$ to exist in $\omega'$, the descending run corresponding to $\bk{f}{g}$ must be to the right of the descending run corresponding to $\bk{i}{j}$ in $\lambda(\omega)$. Since $\bk{u}{v}$ and $\bk{f}{g}$ are incomparable in $\omega$, this implies that $v<f$. Thus the claim is true. Therefore, all descending runs to the right of the descending run corresponding to $\bk{u}{v}$ in $\lambda(\omega)$ will remain to the right in $\lambda(\omega')$ and all descending runs to the left in $\lambda(\omega)$ will remain to the left in $\lambda(\omega')$. Thus $\bk{u}{v}$ will retain the placement $x$ in $\omega'$, i.e.\ $pl_{\omega}(\bk{u}{v})=pl_{\omega'}(\bk{u}{v})$. \\
\indent Now let $\bk{u}{v}$ be a block in $\omega$ with $pl_{\omega}(\bk{u}{v})>c$. By similar arguments, all descending runs to the right of the descending run corresponding to $\bk{u}{v}$ in $\lambda(\omega)$ will remain to the right in $\lambda(\omega')$ and all descending runs to the left in $\lambda(\omega)$ will remain to the left in $\lambda(\omega')$.  Since the descending runs corresponding to $\bk{i}{j}$ and $\bk{k}{l}$ have been combined to form a single descending run, there is one less descending run to the left of the descending run corresponding to $\bk{u}{v}$. Thus $pl_{\omega'}(\bk{u}{v})=pl_{\omega}(\bk{u}{v})-1$. We can now also conclude that if $a\leq pl_{\omega}(B^*)\leq c$, then $a\leq pl_{\omega'}(B^*)\leq c-1$.\\
\emph{Proof of (4)}: Suppose, for the purpose of contradiction, $B_1$ and $B'$ are not combinable in $\omega$. Then $B_1$ and $B'$ are comparable by a chain of covers in $\omega$, i.e.\ either $B_1\precdot A_1\precdot A_2\precdot\cdots\precdot A_k\precdot B'$ or $B'\precdot A_1\precdot A_2 \precdot\cdots\precdot A_k\precdot B_1$ for $k\geq 1$. Since $B_1$ and $B_2$ are combinable in $\omega$ they must either be incomparable or related by a cover in $\omega$. If $B_1$ and $B_2$ are incomparable in $\omega$ then we would have either $B\precdot A_1 \precdot A_2 \precdot \cdots \precdot A_k\precdot B'$ or $B'\precdot A_1\precdot A_2\precdot \cdots \precdot B$ in $\omega'$. Thus $B$ and $B'$ would not be combinable in $\omega'$, which is a contradiction. Therefore $B_1$ and $B_2$ are related by a cover in $\omega$. If $B_1$ and $B_2$ are related by a cover then $k=1$ and $B_2=A_1$ in the chain of covers since $B$ and $B'$ are combinable in $\omega'$. Thus we have either $B_1\precdot B_2\precdot B'$ or $B'\precdot B_2\precdot B_1$ in $\omega$. $B'\precdot B_2\precdot B_1$ is not possible because $pl_{\omega}(B_2)>pl_{\omega}(B_1)$. $B_1\precdot B_2\precdot B'$ implies by part \ref{labelorder} of this Proposition that $pl_{\omega'}(B')\geq c$, which is a contradiction.  Therefore $B_1$ and $B'$ are combinable in $\omega'$.\\ 
\emph{Proof of (5)}: There are two cases to consider. In the first case suppose that $B^*$ is $B_1$ (it cannot be $B_2$ due to its placement). Then $B'=B$. By part \ref{labelorder} of this Proposition, $a=b\leq pl_{\omega'}(B')\leq c-1$. In the second case suppose that $B^*$ is neither $B_1$ nor $B_2$. Then $B'=B^*$. If $b<a$, then by part \ref{labelorder} of this Proposition, $pl_{\omega'}(B')=b$. If $a<b<c$, then again by part \ref{labelorder} of this Proposition, $pl_{\omega'}(B')\leq c-1$.
\end{proof} 

\begin{lemma}
Let $B$ and $B'$ be blocks in $\omega\in(\Omega,\leq_S)$ with $pl_{\omega}(B)=a$ and $pl_{\omega}(B')=a+1$. Then $B$ and $B'$ are combinable in $\omega$. 
\end{lemma}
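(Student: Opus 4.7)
The plan is to split into two cases based on whether $B$ and $B'$ are comparable in $\omega$, and to verify combinability in each. Recall, as stated in the discussion immediately preceding the example of Figure \ref{fig:covers}, that a pair of blocks in $\omega$ is combinable precisely when it is either unrelated or related by a cover in $\omega$.

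First I would dispose of the easy case: if $B$ and $B'$ are incomparable in $\omega$, then the pair is combinable by definition and there is nothing further to prove.

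In the comparable case, I would exploit the fact, noted in the paragraph that defines placement, that the pre-order is refined by the placement order; that is, whenever $X \prec Y$ in $\omega$ we have $pl_\omega(X) < pl_\omega(Y)$. Since $pl_\omega(B) = a < a+1 = pl_\omega(B')$, the alternative $B' \prec B$ is ruled out, so comparability forces $B \prec B'$. To finish I would argue by contradiction that this is a cover relation: if some block $B''$ satisfied $B \prec B'' \prec B'$, applying the placement-refines-order fact twice would yield $a < pl_\omega(B'') < a+1$, which is impossible since placements are integers. Hence $B \precdot B'$, and the pair is combinable.

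The argument is essentially a one-line consequence of the facts that placements are integers and refine the pre-order on blocks; there is no real obstacle here, and none of the more intricate structural results from Proposition \ref{superprop} need to be invoked.
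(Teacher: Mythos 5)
Your proposal is correct and follows essentially the same route as the paper's proof: split on comparability, note that incomparable pairs are combinable by definition, and in the comparable case use the fact that placement refines the block order to show no block can sit strictly between $B$ and $B'$, forcing a cover. Your version is slightly more explicit in ruling out $B'\prec B$, but the argument is the same.
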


\begin{proof}
If $B$ and $B'$ are incomparable in $\omega$ then they are combinable and we are done. If $B$ and $B'$ are comparable in $\omega$ suppose that one block does not cover the other. Then there exists a block $C$ such that $B\prec C\prec B'$ in $\omega$. This implies that $pl_{\omega}(B)<pl_{\omega}(C)<pl_{\omega}(B')$, which is a contradiction. Thus $B$ and $B'$ are related by a cover and are therefore combinable. 
\end{proof}

\begin{lemma}\label{coverlemma}
Suppose $\omega$ and $\omega'$ are permutation pre-orders with $\omega<_S\omega'$. Let $B_1$ and $B_2$ be combinable blocks in $\omega$ such that they are contained in the same block in $\omega'$. Then there exists a permutation pre-order $\omega''$ with $\omega\lessdot_S\omega''\leq_S\omega'$ such that $B_1$ and $B_2$ are in the same block in $\omega''$.
\end{lemma}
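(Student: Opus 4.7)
The plan is to construct $\omega''$ directly by merging $B_1$ and $B_2$ of $\omega$ into a single new block, using $\omega'$ to resolve the direction of any new comparisons that (P1) then forces. Write $B_1 = \bk{i}{j}$ and $B_2 = \bk{k}{l}$, and let $B = \bk{s}{t}$ with $s=\min(i,k)$ and $t=\max(j,l)$; let $\tilde B$ denote the block of $\omega'$ containing $B_1\cup B_2$. Keep every block of $\omega$ other than $B_1$ and $B_2$ intact and replace those two by $B$. For the comparisons: inherit all relations of $\omega$ not involving $B_1$ or $B_2$; if $B^*\prec B_1$ or $B^*\prec B_2$ in $\omega$, declare $B^*\prec B$ in $\omega''$, and symmetrically for $\succ$; and if $B^*$ is incomparable to both $B_1$ and $B_2$ in $\omega$ but $[B^*]\cap[s,t]\neq\emptyset$, declare $B^*$ comparable to $B$ in whichever direction the block $B^*_{\omega'}$ of $\omega'$ containing $B^*$ is related to $\tilde B$ in $\omega'$. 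This is well-defined: (P1) applied to $\omega'$ forces $B^*_{\omega'}$ and $\tilde B$ to be comparable since their intervals meet, and in the edge case $B^*_{\omega'}=\tilde B$ either direction is admissible. Finally close under transitivity.

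With $\omega''$ so defined, three things require verification. First, $\omega''\in\Omega$: (P1) holds by construction, and for (P2) every cover $B^*\precdot B$ in $\omega''$ is either inherited from a cover $B^*\precdot B_1$ or $B^*\precdot B_2$ in $\omega$ (whose interval meets $[i,j]$ or $[k,l]$ by (P2) for $\omega$, and so meets $[s,t]$) or comes from the incomparable case, where $[B^*]\cap[s,t]\neq\emptyset$ by construction. Covers not involving $B$ are unchanged from $\omega$. Second, $\omega\lessdot_S\omega''$: every relation of $\omega$ is preserved in $\omega''$, and $|\omega''|=|\omega|-1$, so the rank formula $\mathrm{rank}(\omega)=n-|\omega|$ gives $\mathrm{rank}(\omega'')=\mathrm{rank}(\omega)+1$, forcing the relation to be a cover in the graded lattice. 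Third, $\omega''\leq_S\omega'$: each direct relation in $\omega''$ either comes from $\omega$ (hence lies in $\omega'$), from $B_1\equiv B_2$ (which holds in $\omega'$ because $B_1,B_2\subset\tilde B$), or was copied directly from $\omega'$; and transitive consequences in $\omega''$ correspond to chains in $\omega'$.

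The main obstacle is verifying (P2) for $\omega''$. Merging $B_1$ and $B_2$ can disrupt the cover structure: a cover $B^*\precdot B_1$ in $\omega$ may cease to be a cover in $\omega''$ if some block $C$ that was incomparable to $B_1$ but satisfied $C\prec B_2$ slips in as an intermediate between $B^*$ and $B$. This is harmless for (P2), which only constrains genuine covers in $\omega''$, but it means the actual covers involving $B$ must be identified carefully before the overlap condition is checked. A secondary subtlety is the edge case $B^*_{\omega'}=\tilde B$ in the construction above: there $B^*$ has already been absorbed into the merged block at the level of $\omega'$, so both directions of comparison are consistent with $\omega''\leq_S\omega'$, and either choice suffices.
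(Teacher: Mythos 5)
Your overall strategy---merge $B_1$ and $B_2$ and use $\omega'$ to orient the comparisons newly forced by (P1)---is the same idea as the paper's, but your execution has a genuine gap at exactly the point you wave off as a harmless ``edge case.'' You assert $|\omega''|=|\omega|-1$ without proof, and this can fail under your construction as stated. The danger is that the transitive closure creates a cycle through the new block $B$, collapsing additional blocks into it. Concretely, take $n=6$ and $\omega=\mu(142536)$, with blocks $B_1=\bk{1}{1}$, $B_2=\bk{6}{6}$, $D=\bk{2}{4}$, $E=\bk{3}{5}$ and the single relation $D\prec E$; let $\omega'=\hat{1}$. Then $B_1$ and $B_2$ are incomparable, hence combinable, and both $D$ and $E$ fall into your edge case ($D_{\omega'}=E_{\omega'}=\tilde B$), so ``either direction is admissible.'' If you choose $B\prec D$ and $E\prec B$, the transitive closure forces $B\prec D\prec E\prec B$, so all six elements collapse into one block and $\omega''=\hat 1$, which is not a cover of $\omega$. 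So the directions in the edge case must be chosen coherently (e.g., always place such blocks above $B$), and you then still owe an argument that no cycle through $B$ arises at all: for non-edge-case blocks this follows because a cycle $B\prec D\preceq E\prec B$ would push down to $\tilde B\preceq D_{\omega'}\preceq E_{\omega'}\preceq \tilde B$ in $\omega'$, forcing $D_{\omega'}=E_{\omega'}=\tilde B$ and landing you back in the edge case. None of this is in your write-up, and it is the crux of the cover claim.

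For comparison, the paper's proof packages the new relations differently in a way that sidesteps most of your verification burden. It forms a single-shard pre-order $\omega^*\in\Omega^{\Sigma}$ with the one two-element block $\bk{a_1}{b_1}$ (where $a_1=\min B_1$, $b_1=\min B_2$), copying the orientation of each $v\in[a_1,b_1]$ relative to $a_1$ from $\omega'$ and fixing the tie-breaking convention that $a_1\equiv v$ in $\omega'$ yields $a_1\preceq v$ in $\omega^*$---precisely the consistent choice your edge case needs. Then $\omega''$ is the transitive closure of $\omega\cup\omega^*$; since this corresponds to intersecting the shard intersection of $\omega$ with one additional shard whose cone contains that of $\omega'$, membership in $\Omega$, the containment $\omega''\leq_S\omega'$, and the rank increment come nearly for free, whereas you must verify (P1), (P2), the block count, and transitivity by hand. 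Your (P1), (P2), and $\omega''\leq_S\omega'$ checks are essentially fine; the missing piece is the block count, and it is not a formality.
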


\begin{proof}
Label the elements of $B_1$ as $a_1, a_2,\ldots, a_m$ with $a_i < a_{i+1}$ for $1\leq i\leq m-1$ and the elements of $B_2$ as $b_1, b_2, \ldots, b_n$ with $b_j< b_{j+1}$ for $1\leq j\leq n-1$. Without loss of generality suppose that $a_1<b_1$. Since blocks $B_1$ and $B_2$ are in the same block in $\omega'$ we know that $a_i\equiv b_j$ in $\omega'$ for $i\in[m], j\in[n]$. Define $\omega^*$ to be the permutation pre-order on $[n]$ with one two element block $\bk{a_1}{b_1}$ and the remaining elements in singleton blocks $\bk{v}{v}$ for $v\in[n]\setminus\{a_1,b_1\}$ with the following order relations:\\

(1) for $v\in[a_1,b_1]: \begin{cases} \text{if}\ a_1\preceq v\ \text{in}\ \omega', \text{then}\ a_1\preceq v\ \text{in}\ \omega^* \\
\text{if}\ a_1\succeq v\ \text{in}\ \omega', \text{then}\ a_1\succeq v\ \text{in}\ \omega^* \\
\text{if}\ a_1\equiv v\ \text{in}\ \omega', \text{then}\ a_1\preceq v\ \text{in}\ \omega^* \end{cases}$\\
\indent ((P1) rules out the possibility that $a_1$ and $v$ are incomparable.)\\

(2) for $v\notin [a_1,b_1]$: $v$ is incomparable in $\omega^*$ to $x$ for all $x\in [n]\setminus \{v\}$.\\

\indent Clearly, $\omega^*\in\Omega^{\Sigma}$. Taking the transitive closure of the union of $\omega$ with $\omega^*$ gives the permutation pre-order $\omega''$. 
\end{proof}

\begin{lemma}\label{min}
Given $\omega$ and an interval $(\omega,\omega')\in(\Omega,\leq_S)$, there exists a unique pair of blocks $B_1$ and $B_2$ whose larger placement is minimal among all pairs in $T(\omega,\omega')$. Furthermore, there exists a unique cover of $\omega$ in which $B_1$ and $B_2$ are combined.
\end{lemma}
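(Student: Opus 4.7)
The plan is to split the statement into existence and uniqueness of the minimal pair, followed by existence and uniqueness of the cover. For the existence of the pair, I would observe that $T(\omega,\omega')$ is nonempty—any cover of $\omega$ lying below $\omega'$ is formed by combining some pair in $T(\omega,\omega')$—so the minimum $c^{*}$ of the larger placement over pairs in $T(\omega,\omega')$ is well-defined. Since a block of $\omega$ is determined by its placement, if $(B_{1},B_{2})$ achieves $c^{*}$ with $pl_{\omega}(B_{1})<pl_{\omega}(B_{2})=c^{*}$, then $B_{2}$ is forced to be the unique block of placement $c^{*}$; only the uniqueness of $B_{1}$ requires real work.

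For the uniqueness of $B_{1}$, suppose toward a contradiction that $(B_{1},B_{2}),(B_{1}',B_{2})\in T(\omega,\omega')$ with $pl_{\omega}(B_{1})<pl_{\omega}(B_{1}')<c^{*}$. Then $B_{1}$ and $B_{1}'$ lie in the same $\omega'$-block as $B_{2}$. If $B_{1}$ and $B_{1}'$ are incomparable in $\omega$, then $(B_{1},B_{1}')\in T(\omega,\omega')$ has larger placement $pl_{\omega}(B_{1}')<c^{*}$, contradicting minimality. Otherwise the placement inequality forces $B_{1}\prec B_{1}'$ in $\omega$, and any chain of covers $B_{1}\precdot A_{1}\precdot\cdots\precdot A_{m}\precdot B_{1}'$ in $\omega$ has each $A_{i}$ sandwiched between $B_{1}$ and $B_{1}'$ in $\omega$ and therefore in $\omega'$; since $B_{1}\equiv B_{1}'$ in $\omega'$, each $A_{i}$ is equivalent to them in $\omega'$, so $(B_{1},A_{1})\in T(\omega,\omega')$ with larger placement $pl_{\omega}(A_{1})<pl_{\omega}(B_{1}')<c^{*}$, again contradicting minimality. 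Therefore $B_{1}=B_{1}'$.

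For the cover, the existence of some $\omega''$ with $\omega\lessdot_{S}\omega''\leq_{S}\omega'$ combining $B_{1}$ and $B_{2}$ is immediate from Lemma~\ref{coverlemma}. For uniqueness, the blocks of any such $\omega''$ are fixed (replace $B_{1}$ and $B_{2}$ by $B=B_{1}\cup B_{2}$), and so are the relations inherited from $\omega$; the only freedom is the direction of relations between $B$ and blocks $C$ that were incomparable to both $B_{1}$ and $B_{2}$ in $\omega$ but become comparable to $B$ in $\omega''$. By Proposition~\ref{superprop}(\ref{btwn2}) each such $C$ has placement strictly between those of $B_{1}$ and $B_{2}$. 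The main obstacle, which the uniqueness of the pair unlocks, is to pin down these directions: if such a $C$ lay in the same $\omega'$-block as $B$, then $(B_{1},C)$ would lie in $T(\omega,\omega')$ (combinable because $C$ is incomparable to $B_{1}$ in $\omega$) with larger placement $pl_{\omega}(C)<c^{*}$, contradicting minimality. Hence $C$'s $\omega'$-block is distinct from $B$'s $\omega'$-block, yet (by the incomparable-case analysis inside Proposition~\ref{superprop}(1)) $C$'s interval lies inside the interval of $B$, which in turn lies inside the interval of $B$'s $\omega'$-block; the two $\omega'$-blocks therefore overlap, so (P1) applied to $\omega'$ fixes a definite direction between them, and $\omega''\leq_{S}\omega'$ forces this same direction for $C$ and $B$ in $\omega''$. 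This completes the determination of $\omega''$.
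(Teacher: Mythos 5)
Your proposal is correct and follows essentially the same route as the paper: the uniqueness of the pair is handled by the same three-way case analysis (incomparable, cover, or a longer chain of covers whose first step yields a combinable pair with smaller larger placement), and the uniqueness of the cover rests on the same key fact, namely that by Proposition~\ref{superprop}(\ref{btwn2}) any ambiguous block has placement strictly between $a$ and $c$, so placing it in the same $\omega'$-block as $B$ would contradict minimality. The only cosmetic difference is that the paper derives a contradiction from two hypothetical covers while you determine the direction of each new relation directly from (P1) in $\omega'$; these are contrapositive formulations of the same argument.
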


\begin{proof}
Suppose for the purpose of contradiction there are two pairs of blocks with minimal larger placement in $\omega$. Thus we have placements $a, b$ and $c$ such that $a<b<c$ and the blocks with placements $a$ and $c$ and the blocks with placements $b$ and $c$ are combinable in $\omega$. For convenience we will denote the blocks by their respective placements. If blocks $a$ and $b$ are incomparable in $\omega$ or block $b$ covers block $a$ in $\omega$, then they are combinable in $\omega$. Thus the pair $(a,b)$ is in $T(\omega,\omega')$, contradicting the minimality of $c$. Block $a$ cannot be greater than block $b$ in $\omega$ due to their placements, so the only case left to consider is when block $b$ is greater than block $a$ but does not cover it. In this case there exists a chain of covers from $a$ to $b$ such that all of the blocks in the chain have placements less than $b$, i.e.\ $a\precdot a_1\precdot a_2 \precdot \cdots \precdot a_k \precdot b$ where $a_j< b$ for all $j\in[k]$. Therefore $(a,a_1)$ is in $T(\omega,\omega')$, contradicting the minimality of $c$.\\
\indent We have established that there exists a unique pair of blocks $B_1$ and $B_2$ whose larger placement $c$ is minimal among all pairs in $T(\omega,\omega')$. We know by Lemma \ref{coverlemma} that there exists at least one cover of $\omega$ in which $B_1$ and $B_2$ are combined. Suppose, again for the purpose of contradiction, there is more than one cover that can be obtained by combining $B_1$ and $B_2$ in $\omega$. The only way for this to occur is if a block is incomparable to both $B_1$ and $B_2$ in $\omega$ and comparable to the combined block in the covers. Thus we have the following situation. $B_1$ and $B_2$ are combined to form block $B$ in $\omega_1$ and $\omega_2$ for $\omega\lessdot_S\omega_1$ and $\omega\lessdot_S\omega_2$. Block $B'$ is incomparable to both $B_1$ and $B_2$ in $\omega$. In $\omega_1$, $B\prec B'$ and in $\omega_2$, $B\succ B'$ implying that $B'$ and $B$ are in the same block in $\omega'$.  This means that $B'$ is also in the same block as $B_1$ and $B_2$ in $\omega'$. By Proposition \ref{superprop} part \ref{btwn2} we know that $pl_{\omega}(B')=b$ such that $a<b<c$. Since $B'$ and $B_1$ are incomparable in $\omega$ they are a pair in $T(\omega,\omega')$, which contradicts the minimality of $c$. Therefore, there exists a unique cover of $\omega$ in which $B_1$ and $B_2$ are combined.
\end{proof}

We now have the necessary tools to prove Theorem \ref{ELlabel}.

\begin{proof}
We need to establish that $\sigma$ satisfies conditions (EL1) and (EL2). Let $(\omega,\omega')$ be an interval in $(\Omega,\leq_S)$. Denote by $\zeta$ the maximal chain $\omega=\omega_0\lessdot_S\omega_1\lessdot_S\cdots\lessdot_S\omega_k=\omega'$ where each $\omega_{i+1}$ is obtained by combining a pair of blocks from $T(\omega_i,\omega')$ such that the larger of the two placements is minimal among all pairs. By Lemma \ref{min}, this maximal chain is unique and thus does not share its label sequence with any other chain.\\
\indent Now we will show that the edge label sequence for $\zeta$ is weakly increasing. Suppose, for the purpose of contradiction, there is a descent in the edge label sequence, i.e.\ $\sigma(\omega_j,\omega_{j+1})=c$ and $\sigma(\omega_{j+1},\omega_{j+2})=b$ for $c>b$. Let $B$ and $B'$ with $pl_{\omega_{j+1}}(B)=a<b=pl_{\omega_{j+1}}(B')$ be the blocks in $\omega_{j+1}$ that were combined to form $\omega_{j+2}$. We claim that the pair of blocks combined in $\omega_j$ did not have minimal larger placement.\\
\emph{Proof of claim:} There are three cases to consider. In the first case we suppose that $B$ and $B'$ are the same blocks in $\omega_j$ and $\omega_{j+1}$. Since blocks $B$ and $B'$ are not the blocks that were combined in $\omega_j$ to form $\omega_{j+1}$ and they both have placements less than $c$ in $\omega_{j+1}$, Proposition \ref{superprop} part \ref{labelorder} tells us that $pl_{\omega_j}(B)<c$ and $pl_{\omega_j}(B')<c$. The only way in which $B$ and $B'$ are not combinable in $\omega_j$ but are combinable in $\omega_{j+1}$ is if $B\precdot B^*\precdot B'$ in $\omega_j$ and either $B$ and $B^*$ or $B^*$ and $B'$ are combined to form $\omega_{j+1}$. Since we are assuming that $B$ and $B'$ are the same blocks in $\omega_j$ and $\omega_{j+1}$, this cannot occur. This implies that $B$ and $B'$ are combinable in $\omega_j$. Thus $B$ and $B'$ are a combinable pair of blocks in $\omega_j$ with larger placement less than $c$. In the second case, suppose that $B$ is the new block in $\omega_{j+1}$ formed by combining blocks $B_1$ and $B_2$ in $\omega_j$ where $pl_{\omega_j}(B_1)=c_1<c=pl_{\omega_j}(B_2)$. Then by Proposition \ref{superprop} part \ref{comb}, $B'$ and $B_1$ are a combinable pair in $\omega_j$ with larger placement less than $c$. In the third case, suppose that $B'$ is the new block in $\omega_{j+1}$ formed by combining blocks $B'_1$ and $B'_2$ where $pl_{\omega_j}(B'_1)=c_2<c=pl_{\omega_j}(B'_2)$. Then again by Proposition \ref{superprop} part \ref{comb}, $B$ and $B'_1$ are a combinable pair in $\omega_j$ with larger placement less than $c$. In each case, we obtain a pair of blocks in $\omega_j$ such that the larger placement of the pair is less than $c$. Therefore, the claim is true and we have reached a contradiction. Thus, there cannot be a descent in the edge label sequence for $\zeta$, meaning $\zeta$ is weakly increasing.\\
\indent All that remains to be shown for (EL1) is that $\zeta$ is the only weakly increasing maximal chain. As established above, $\zeta$ does not share its label sequence with any other chain. Suppose we follow some other maximal chain $\chi$ from $\omega$ to $\omega'$. At some step these chains must differ, meaning that in $\zeta$ we combine two blocks from $\omega_j$ to form $\omega_{j+1}$ such that the larger placement of the pair of blocks in $\omega_j$ is minimal among pairs in $T(\omega_j,\omega')$, whereas in $\chi$ we combine two blocks from $\omega_j$ to form $\omega_{j+1}^*$ such that the larger placement of the pair of blocks in $\omega_j$ is not minimal among pairs in $T(\omega_j,\omega')$. Thus we have the following situation. Let $B_1$ and $B_2$ be blocks of $\omega_j$ with $pl_{\omega_j}(B_1)=a<c=pl_{\omega_j}(B_2)$ that were combined to form $\omega_{j+1}$. Let $B^*_1$ and $B^*_2$ be blocks of $\omega_j$ with $pl_{\omega_j}(B^*_1)=b<d=pl_{\omega_j}(B^*_2)$, for $d>c$, that were combined to form $\omega_{j+1}^*$. Since $B_1$ and $B_2$ are a pair in $T(\omega_j,\omega')$, they must be in the same block in $\omega'$. Thus, at some later step in $\chi$, a block containing $B_1$ must be combined with a block containing $B_2$. By Proposition \ref{superprop} part \ref{staysmall}, if we continue to pick pairs of blocks that cause the label sequence of $\chi$ to increase, the placements of the blocks containing $B_1$ and $B_2$ in $\omega_k^*$ for $k>j$ will always be smaller than the edge label leading to $\omega_k^*$ assigned by $\sigma$ (because in $\omega_{j+1}^*$ $B_1$ and $B_2$ have edge labels less than $d$). Eventually we are forced to combine the blocks containing $B_1$ and $B_2$ (or a pair with smaller larger placement) creating a descent in the edge label sequence for $\chi$. Thus, all other maximal chains have at least one descent. Therefore, $\zeta$ satisfies (EL1).\\
\indent Since we chose the smallest possible label at each step in $\zeta$, its label sequence is lexicographically smaller than the label sequences of all other maximal chains. Therefore, $\zeta$ satisfies (EL2).
\end{proof}

Since $(\Omega,\leq_S)$ is a graded poset, Theorem \ref{ELlabel} implies that it is EL-shellable and hence shellable \cite[Theorem 2.3]{aB80}. Thus the M\"{o}bius number of an interval in $(\Omega,\leq_S)$, $\mu(\omega,\omega')$, is equal to $(-1)^{|\omega|-|\omega'|}$ times the number of strictly decreasing maximal chains from $\omega$ to $\omega'$.  To count the strictly decreasing maximal chains from $\omega$ to $\omega'$, we need to determine the number of ways in which the combinable block with the highest placement can be combined with blocks with lower placements at each step in the chain. At each step in a maximal chain there may be multiple ways to go up by a cover and obtain the maximum possible label on that edge. For this reason, counting strictly decreasing maximal chains is not completely straightforward, and at this time we do not know how to count strictly decreasing maximal chains for general intervals. Interestingly, the M\"{o}bius number of the entire lattice has a simple description \cite{nR09}. It is the number of indecomposable permutations, or equivalently, the number of permutations with no global descents. See Sequence A003319 of \cite{njaS} for details and references.

\section{Noncrossing Pre-orders}\label{sectnon}

In this final section, we consider a subset of $S_n$, the $c$-sortable permutations, and describe the corresponding subset of the permutation pre-orders.\\
\indent A Coxeter group $W$ is generated by a set $S$ of \emph{simple generators}. For $W=S_n$ the simple generators are $S=\{s_1,s_2,\ldots,s_{n-1}\}$ where $s_i=(i,i+1)$ for $1\leq i \leq n-1$. A \emph{Coxeter element} $c$ is the product of the simple generators in any order. For a chosen Coxeter element $c\in S_n$, we can define a \emph{barring} on the numbers $\{2,3,\ldots,n-1\}$ as follows: 
\begin{enumerate}
\item if $s_{i-1}$ is before $s_{i}$ in $c$ then $i$ is lower-barred, and we denote this by $\underline{i}$
\item if $s_{i-1}$ is after $s_{i}$ in $c$ then $i$ is upper-barred, and we denote this by $\overline{i}$.
\end{enumerate}

A Coxeter element in $S_n$ can be written as an $n$-cycle using the barring by placing $1$ at the 12 o'clock position on a circle followed clockwise by the lower-barred numbers in ascending numerical order, the number $n$, and then the upper-barred numbers in descending numerical order.

\begin{figure}[ht]
\[\raisebox{-27 pt}{\includegraphics{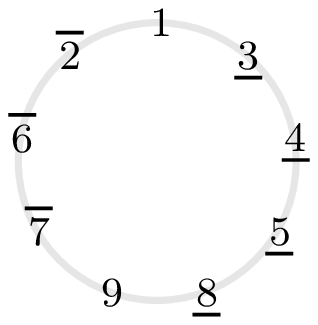}}
\]
\caption{}\label{fig:cycle}
\end{figure}

\begin{example}\label{barringex}
For $c=s_2s_1s_3s_7s_6s_4s_5s_8\in S_9$ the lower-barred numbers are $\underline{3},\underline{4},\underline{5}, \underline{8}$ and the upper-barred numbers are $\overline{2},\overline{6},\overline{7}$. This gives us the cycle shown in Figure \ref{fig:cycle}.
\end{example}

For a permutation $\pi\in S_n$ and $x\in S_m$ we will say that $\pi$ \emph{contains the pattern $x$} if there are integers $1\leq i_1< i_2<\cdots<i_m\leq n$ such that for all $1\leq j <k \leq m$ we have $x_j<x_k$ if and only if $y_{i_j}<y_{i_k}$. Otherwise we will say that $\pi$ \emph{avoids the pattern $x$}. In this paper we will be concerned with patterns involving barred numbers, specifically the patterns $\overline{2}31$ and $31\underline{2}$. For $\pi\in S_n$ with chosen Coxeter element $c$, we say that $\pi$ \emph{contains the pattern} $\overline{2}31$ if it contains an instance of the pattern $231$ in which the number representing the $2$ in $\pi$ is upper-barred. Similarly, we say $\pi$ \emph{contains the pattern} $31\underline{2}$ if it contains an instance of the pattern $312$ in which the number representing the $2$ in $\pi$ is lower-barred. If these conditions do not hold, we again say that $\pi$ \emph{avoids the given pattern}.

\begin{example}
As a continuation of Example \ref{barringex}, suppose $c=s_2s_1s_3s_7s_6s_4s_5s_8\in S_9$. Then $\pi=163425897$ contains four instances of the pattern $31\underline{2}$: $63\underline{4}$, $63\underline{5}$, $64\underline{5}$, and $62\underline{5}$, and avoids the pattern $\overline{2}31$. 
\end{example} 

There is a general definition of $c$-sortable elements of a Coxeter group which can be found in \cite{nR07}. For this paper, the following characterization of $c$-sortable elements in $S_n$ is sufficient. Given a Coxeter element $c$, a permutation $\pi\in S_n$ is \emph{$c$-sortable} if and only if $\pi$ avoids the patterns $\overline{2}31$ and $31\underline{2}$ \cite[Lemmas 4.1 and 4.8]{nR07}.\\
\indent A \emph{$c$-noncrossing partition} is a partition of $[n]$ into blocks such that when the blocks are drawn on the cycle $c$, the convex hulls of the blocks do not overlap.

\begin{figure}[ht]
\[\raisebox{-27 pt}{\includegraphics{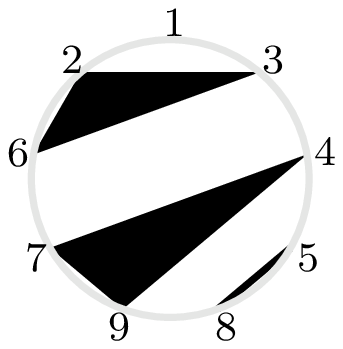}}
\]
\caption{}\label{fig:noncross}
\end{figure}

\begin{example}
Continuing Example \ref{barringex}, a $c$-noncrossing partition of the cycle $c$ is shown in Figure \ref{fig:noncross}.
\end{example} 

Define a \emph{noncrossing pre-order} to be a pre-order $\omega\in\Omega$ on $[n]$ with respect to a Coxeter element $c$ with the following two conditions:
\begin{enumerate}
\item blocks in $\omega$ form a $c$-noncrossing partition of the cycle $c$, and 
\item suppose blocks $B_{[i_s,i_1]}$ and $B_{[j_t,j_1]}$ intersect in $\omega$, so that, without loss of generality, there exists a $j_x\in B_{[j_t,j_1]}$ for $x\in[t]$ such that $i_s<j_x<i_1$. If $j_x$ is upper-barred with respect to $c$ then $B_{[i_s,i_1]}\prec B_{[j_t,j_1]}$ in $\omega$ and if $j_x$ is lower-barred with respect to $c$ then $B_{[i_s,i_1]} \succ B_{[j_t,j_1]}$ in $\omega$. 
\end{enumerate} Let $\Omega_c^{NC}=\{\omega\in\Omega: \omega\ \textrm{is a noncrossing pre-order}\}$, and let $\Omega^{c}$ be the image of the $c$-sortable permutations under $\mu$.

\begin{proposition}
$\Omega^c=\Omega_c^{NC}$
\end{proposition}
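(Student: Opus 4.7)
The plan is to prove both containments of $\Omega^c = \Omega^{NC}_c$, using the bijection $\mu$ to translate between the pattern-avoidance characterization of $c$-sortable permutations and the two defining conditions of $\Omega^{NC}_c$.

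For $\Omega^c \subseteq \Omega^{NC}_c$, I start with a $c$-sortable permutation $\pi$ and set $\omega = \mu(\pi)$. Condition~(2) is verified by a direct translation to pattern avoidance: given overlapping blocks $B = B_{[i_s,i_1]}$ and $B' = B_{[j_t,j_1]}$ of $\omega$ with a witness $j_x \in B'$ satisfying $i_s < j_x < i_1$, the descending runs corresponding to $B$ and $B'$ appear in some order in $\pi$. If $B$'s run is to the left, then the triple $i_1, i_s, j_x$ is a $312$ pattern in $\pi$ with middle value $j_x$, so avoidance of $31\underline{2}$ forces $j_x$ to be upper-barred, matching the required relation $B \prec B'$; the symmetric case produces a $231$ pattern and avoidance of $\overline{2}31$ forces $j_x$ to be lower-barred, matching $B \succ B'$. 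For condition~(1), I first show that disjoint $\mathbb{R}$-intervals preclude crossing on cycle $c$: the four natural arcs of the cycle (the vertex $1$, the lower-barred arc in ascending order, the vertex $n$, and the upper-barred arc in descending order) force any block with $\max B < \min B'$ to occupy a single contiguous cyclic arc disjoint from $B'$. Hence, assuming for contradiction that $B,B'$ cross on cycle $c$, their $\mathbb{R}$-intervals must overlap. The crossing then supplies a pair of witnesses with opposite barrings, one from each block lying in the other's interior, and applying condition~(2) (just established) to each witness forces contradictory relations between $B$ and $B'$ in $\omega$.

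For $\Omega^{NC}_c \subseteq \Omega^c$, I take $\omega \in \Omega^{NC}_c$ and set $\pi = \lambda(\omega)$. Arguing contrapositively, suppose $\pi$ contains the pattern $31\underline{2}$ at positions $p_1 < p_2 < p_3$ with $\pi_{p_1} > \pi_{p_3} > \pi_{p_2}$ and $\pi_{p_3}$ lower-barred; the $\overline{2}31$ case follows by a symmetric argument. The first subcase, when $\pi_{p_1}$ and $\pi_{p_2}$ lie in the same descending run and hence in the same block $B$, is immediate: the block $B' \ni \pi_{p_3}$ overlaps $B$ in $\mathbb{R}$ (since $\pi_{p_3}$ lies strictly between $\pi_{p_2}$ and $\pi_{p_1}$), and $B' \succ B$ in $\omega$ because $\pi_{p_3}$ appears to the right of $B$ in $\pi = \lambda(\omega)$, yet condition~(2) applied to the lower-barred witness $\pi_{p_3}$ demands $B \succ B'$, a contradiction. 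In the remaining subcase, when $\pi_{p_1}$ and $\pi_{p_2}$ lie in different runs, I will leverage Lemma~\ref{inversionint} to produce a chain of overlapping blocks between the blocks containing $\pi_{p_1}$ and $\pi_{p_3}$, and argue that somewhere along this chain either condition~(2) is violated (a conflicting relation forced by a barring) or condition~(1) is violated (a cycle crossing exhibited directly by the positions of the block endpoints).

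The main obstacle is the cyclic bookkeeping for condition~(1): translating the cyclic order on $c$ into the linear left-to-right order on $\pi = \lambda(\omega)$ requires separately tracking lower-barred and upper-barred elements and exhibiting concrete crossing witnesses in terms of block endpoints. I expect the cleanest way through this is to prove the following characterization as an intermediate lemma: two blocks $B, B'$ of a permutation pre-order $\omega$ cross on cycle $c$ if and only if their $\mathbb{R}$-intervals overlap and one of the blocks contains two elements in the interior of the other's $\mathbb{R}$-interval with opposite barrings. Such a lemma reduces condition~(1) to a clean combinatorial consequence of the barring structure and immediately links it back to pattern avoidance through condition~(2), unifying both directions of the argument.
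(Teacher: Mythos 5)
Your forward containment $\Omega^c\subseteq\Omega_c^{NC}$ is essentially sound: the verification of condition~(2) via the patterns $\overline{2}31$ and $31\underline{2}$ is exactly the paper's argument, and your observation that the sets $\{1,\ldots,k\}$ always occupy a contiguous arc of the cycle (so disjoint $\mathbb{R}$-intervals cannot cross) is a correct and worthwhile supplement. But the reverse containment is where your proof has a genuine gap, and it is precisely the direction the paper is engineered to avoid proving. The paper observes that a $c$-noncrossing partition admits a unique block order satisfying condition~(2), giving a bijection $\Omega_c^{NC}\to\{c\text{-noncrossing partitions}\}$, and then invokes the known bijection between $c$-noncrossing partitions and $c$-sortable elements \cite[Theorem 6.1]{nR07} to conclude $|\Omega^c|=|\Omega_c^{NC}|$; one containment then forces equality. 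You instead attempt $\Omega_c^{NC}\subseteq\Omega^c$ directly, which would make the proof self-contained, but you have not actually done it: the subcase where $\pi_{p_1}$ and $\pi_{p_2}$ lie in different descending runs is only a plan (``I will leverage Lemma~\ref{inversionint}\ldots and argue that somewhere along this chain either condition~(2) is violated or condition~(1) is violated''), and the chain of overlapping runs produced by Lemma~\ref{inversionint} connects the runs of $\pi_{p_1}$ and $\pi_{p_2}$ via an inversion argument --- it is not obvious how a forbidden pattern whose witnesses are spread across that chain localizes to a single violation of (1) or (2), and no argument is given.

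The same incompleteness affects your proposed intermediate lemma (two blocks cross on the cycle $c$ iff their intervals overlap and one block has two elements of opposite barrings interior to the other's interval): it is stated as the key to unifying both directions but never proved, and its statement needs care --- a crossing can also be witnessed by one element interior to the other block's interval together with the relative cyclic position of the blocks' extremes, not only by two interior elements of opposite barrings. As written, then, the proposal establishes one containment and sketches the other. Either complete the reverse containment (prove the crossing lemma and carry out the different-runs subcase), or do what the paper does and replace the entire reverse direction with the counting argument through $c$-noncrossing partitions, which is shorter and offloads the hard combinatorics to \cite{nR07}.
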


\begin{proof}
For any $c$-noncrossing partition there is a unique partial order on the blocks which defines a noncrossing pre-order. This defines a bijection from $\Omega_c^{NC}$ to $c$-noncrossing partitions. In \cite[Theorem 6.1]{nR07} it was shown that there exists a bijection between $c$-noncrossing partitions and $c$-sortable permutations. Thus $|\Omega^c|=|\Omega_c^{NC}|$, and it is enough to show that $\Omega^c\subseteq\Omega_c^{NC}$. Let $\omega\in\Omega^c$ and let $\lambda(\omega)=\pi$. Suppose $i_1\cdots i_s$ and $j_1\cdots j_t$ are distinct descending runs in $\pi$ corresponding respectively to blocks $B_{[i_s,i_1]}$ and $B_{[j_t,j_1]}$ in $\omega$ such that $B_{[i_s,i_1]}$ and $B_{[j_t,j_1]}$ overlap. Then without loss of generality there exists a $j_x\in B_{[j_t,j_1]}$ such that $i_s<j_x<i_1$ for $x\in[s]$. Suppose $j_x$ is upper-barred with respect to $c$. Then if $B_{[i_s,i_1]}\succ B_{[j_t,j_1]}$ in $\omega$, $\pi=\cdots j_1\cdots j_t\cdots i_1\cdots i_s\cdots$ and $j_xi_1i_s$ forms a $\overline{2}31$ pattern, which is a contradiction of $\pi$ being $c$-sortable. Thus $B_{[i_s,i_1]}\prec B_{[j_t,j_1]}$ in $\omega$. Suppose $j_x$ is lower-barred with respect to $c$. Then if $B_{[i_s,i_1]}\prec B_{[j_t,j_1]}$, $\pi=\cdots i_1\cdots i_s\cdots j_1\cdots j_t\cdots$ and $i_1i_sj_x$ forms a $31\underline{2}$ pattern which is a contradiction of $\pi$ being $c$-sortable. Thus $B_{[i_s,i_1]}\succ B_{[j_t,j_1]}$ in $\omega$. 
\end{proof} 

The choice $c=s_1s_2\cdots s_{n-1}$ is the case where every number is lower-barred and the choice $c=s_{n-1}s_{n-2}\cdots s_1$ is the case where every number is upper-barred. In these specific cases, if two distinct blocks $B_{[i,j]}$ and $B_{[k,l]}$ overlap in $\omega$, then either $[i,j]\subset[k,l]$ or $[k,l]\subset[i,j]$. Thus we have the following corollaries.

\begin{corollary} Let $c=s_1s_2\cdots s_{n-1}$ and let $\omega\in\Omega_c^{NC}$. Then $B_{[i,j]}\preceq B_{[k,l]}$ in $\omega$ if and only if $[i,j]\subset[k,l]$.
\end{corollary}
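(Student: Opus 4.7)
The key observation is that for $c = s_1 s_2 \cdots s_{n-1}$ every element of $\{2,\ldots,n-1\}$ is lower-barred, so condition (2) in the definition of a noncrossing pre-order always points in the same direction. My plan is to first prove a local statement for overlapping blocks and then bootstrap it along chains of covers.

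Concretely, I would first establish the following key step: if $B_{[i,j]}$ and $B_{[k,l]}$ are distinct blocks of $\omega$ whose intervals overlap, then whichever has the smaller interval is $\prec$ the other. The remark immediately preceding the corollary says that overlapping intervals in this setting must be nested; without loss of generality assume $[i,j]\subsetneq[k,l]$. Since distinct blocks have distinct minima and distinct maxima (else the common extremum would belong to both), the containment is strict on both ends: $k < i \leq j < l$. Taking $j_x = i$ in condition (2) with $(i_s,i_1)=(k,l)$ and $(j_t,j_1)=(i,j)$, we have $k<i<l$, so $i\in\{2,\ldots,n-1\}$ and hence $i$ is lower-barred. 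Condition (2) then yields $B_{[i,j]} \prec B_{[k,l]}$.

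For the forward direction of the corollary, if $B_{[i,j]} \prec B_{[k,l]}$ I would take a chain of covers $B_{[i,j]} = B_0 \precdot B_1 \precdot \cdots \precdot B_m = B_{[k,l]}$; (P2) ensures consecutive blocks overlap, and the key step then forces each step to strictly enlarge the underlying interval. Transitivity of set inclusion gives $[i,j] \subset [k,l]$, and the equality case $B_{[i,j]}=B_{[k,l]}$ is immediate. For the converse, $[i,j]=[k,l]$ forces the two blocks to coincide (the common minimum would lie in both), so $B_{[i,j]} \preceq B_{[k,l]}$ reflexively; and $[i,j]\subsetneq[k,l]$ puts us in the overlapping nested case so the key step gives $B_{[i,j]} \prec B_{[k,l]}$ directly.

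The only real subtlety is in the key step, namely checking that the chosen witness really does lie in the lower-barred range and that we have correctly identified which of $B_{[i,j]}$ and $B_{[k,l]}$ plays the role of $B_{[i_s,i_1]}$ in condition (2); both points follow at once from the strict inequalities $k<i\leq j<l$. Everything else is a routine chaining argument.
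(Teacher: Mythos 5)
Your proof is correct and fills in exactly the argument the paper intends (the paper states this corollary without proof, as an immediate consequence of the preceding nesting remark): you reduce overlapping blocks to nested intervals, note that the strict inequalities $k<i\leq j<l$ place the witness $i$ in the lower-barred range $\{2,\ldots,n-1\}$, and apply condition (2), with a routine chain-of-covers argument via (P2) for the forward direction. No gaps.
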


\begin{corollary}
Let $c=s_{n-1}s_{n-2}\cdots s_1$ and let $\omega\in\Omega_c^{NC}$. Then $B_{[i,j]}\preceq B_{[k,l]}$ in $\omega$ if and only if $[k,l]\subset[i,j]$.
\end{corollary}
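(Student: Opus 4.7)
The plan is to prove the corollary in both directions by leveraging two observations already recorded in the paragraph just before the statement: for $c = s_{n-1}s_{n-2}\cdots s_1$, every element of $\{2,\ldots,n-1\}$ is upper-barred, and any two overlapping blocks in a noncrossing pre-order for this $c$ have nested intervals.

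For the ``if'' direction I would assume $[k,l]\subseteq[i,j]$. If the two intervals are equal, then $i=k$ is a common element of $B_{[i,j]}$ and $B_{[k,l]}$, so the blocks coincide and $B_{[i,j]}\preceq B_{[k,l]}$ is immediate. Otherwise the containment is strict; the same ``two blocks cannot share an element'' argument rules out $i=k$ and $j=l$, so $i<k\leq l<j$. The element $k\in B_{[k,l]}$ then lies in $\{2,\ldots,n-1\}$ and is upper-barred, and condition~(2) in the definition of a noncrossing pre-order immediately delivers $B_{[i,j]}\prec B_{[k,l]}$.

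For the converse I would fix a saturated chain $B_{[i,j]}=B_0\precdot B_1\precdot\cdots\precdot B_m=B_{[k,l]}$ witnessing $B_{[i,j]}\preceq B_{[k,l]}$, which exists because $\omega$ is finite. Axiom~(P2) forces each consecutive pair $B_a,B_{a+1}$ to overlap, and the nesting fact then makes their intervals comparable by containment. Applying the just-proved ``if'' direction to each of the two possible nesting patterns shows that the upper block in a cover must carry the strictly smaller interval, since the opposite containment would give $B_{a+1}\prec B_a$ and contradict the cover. Chaining these strict containments along the chain delivers $[k,l]\subseteq[i,j]$.

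I do not expect a serious obstacle: the argument is basically careful bookkeeping of condition~(2) in the all-upper-barred case. The only subtle point is the ``shared endpoint'' step used to upgrade $[k,l]\subseteq[i,j]$ to $i<k\leq l<j$, which is what produces an element strictly between $i$ and $j$ so that condition~(2) can be invoked.
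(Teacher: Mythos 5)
Your proposal is correct and follows the route the paper intends: the corollary is stated as an immediate consequence of the observation that for $c=s_{n-1}\cdots s_1$ every element of $\{2,\ldots,n-1\}$ is upper-barred and overlapping blocks have nested intervals, combined with condition (2) of the definition of a noncrossing pre-order. Your write-up simply makes explicit the two details the paper leaves implicit (the shared-endpoint argument giving $i<k\leq l<j$, and the chain-of-covers argument via (P2) for the converse), so it is the same argument carried out in full.
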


\section*{Acknowledgments} The author thanks her advisor Dr. Nathan Reading for presenting her with the idea to explore the shard intersection order on permutations and for insight and direction along the way.

\end{document}